\theoremstyle{plain}
\newtheorem{thm}{Theorem}[section]
\newtheorem{prop}[thm]{Proposition}
\newtheorem{lem}[thm]{Lemma}
\newtheorem{cor}[thm]{Corollary}
\theoremstyle{definition}
\newtheorem{defn}[thm]{Definition}
\newtheorem{defnprop}[thm]{Definition-Proposition}
\newtheorem{eg}[thm]{Example}
\newtheorem{set}[thm]{Settings}
\theoremstyle{remark}
\newtheorem{rem}[thm]{Remark}
\newtheorem{claim}[thm]{Claim}
\newcommand{\bC}{\ensuremath{\mathbb{C}}}
\newcommand{\bF}{\ensuremath{\mathbb{F}}}
\newcommand{\bH}{\ensuremath{\mathbb{H}}}
\newcommand{\bP}{\ensuremath{\mathbb{P}}}
\newcommand{\bQ}{\ensuremath{\mathbb{Q}}}
\newcommand{\bR}{\ensuremath{\mathbb{R}}}
\newcommand{\bZ}{\ensuremath{\mathbb{Z}}}
\newcommand{\cF}{\ensuremath{\mathcal{F}}}
\newcommand{\cH}{\ensuremath{\mathcal{H}}}
\newcommand{\cL}{\ensuremath{\mathcal{L}}}
\newcommand{\cO}{\ensuremath{\mathcal{O}}}
\newcommand{\cS}{\ensuremath{\mathcal{S}}}
\newcommand{\cT}{\ensuremath{\mathcal{T}}}
\newcommand{\cX}{\ensuremath{\mathcal{X}}}
\newcommand{\cY}{\ensuremath{\mathcal{Y}}}
\newcommand{\fH}{\ensuremath{\mathfrak{H}}}
\DeclareMathOperator{\Pic}{Pic}
\DeclareMathOperator{\Aut}{Aut}
\DeclareMathOperator{\Ker}{Ker}
\DeclareMathOperator{\Coker}{Coker}
\DeclareMathOperator{\Image}{Im}
\DeclareMathOperator{\Gr}{Gr}
\DeclareMathOperator{\id}{id}
\DeclareMathOperator{\prim}{prim}
\newcommand{\dps}{\displaystyle}
\newcommand{\ol}{\overline}
\subjclass[2020]{14D06, 32G20}
\begin{document}

\title
[The $\partial \bar{\partial}$-lemma and Hard Lefschetz property]
{On Hodge structures  
of 
compact complex manifolds with semistable degenerations} 

\author{Taro Sano}
\address{Department of Mathematics, Faculty of Science, Kobe University, Kobe, 657-8501, Japan}
\email{tarosano@math.kobe-u.ac.jp}

\maketitle

\begin{abstract} 
	Compact K\"{a}hler manifolds satisfy several nice Hodge-theoretic properties such as the Hodge symmetry, the Hard Lefschetz property and 
	the Hodge--Riemann bilinear relations, etc. 
	In this note, we investigate when such nice properties hold on compact complex manifolds with semistable degenerations. 
	
	For compact complex manifolds which can be obtained as smoothings of SNC varieties without triple intersection locus,  
	we show the Hodge symmetry when the monodromy logarithm induces isomorphisms on the associated graded pieces of the weight filtrations of the limiting mixed Hodge structures. 
	We also show the Hodge--Riemann relations on $H^3$ of compact complex 3-folds with such semistable degenerations under some conditions. 
		\end{abstract}

\tableofcontents

\section{Introduction} 
Smooth projective varieties (or more generally K\"{a}hler manifolds) satisfy several nice Hodge theoretic properties. 
For example, their de Rham cohomology admits pure Hodge structures and ample (K\"{a}hler) classes on them 
define positive definite bilinear form on their primitive cohomology groups. 

There are plenty of non-K\"{a}hler compact complex manifolds whose cohomology groups has nice properties as K\"{a}hler manifolds. 
Clemens and Friedman \cite{MR1141199} constructed non-K\"{a}hler Calabi--Yau 3-folds with $b_2 = 0$ and arbitrarily large $b_3$ (so called Clemens manifolds). 
Friedman \cite{MR4085665} and Li \cite{MR4735955}  showed that the $\partial\bar{\partial}$-lemma holds on Clemens manifolds. 
Furthermore, Li \cite{MR4735955} showed that an analogue of the Hodge--Riemann bilinear relation holds on them as well. 

On the other hand, Hashimoto and the author (\cite{MR4584262}, \cite{MR4406696}) constructed non-K\"{a}hler Calabi--Yau manifolds $X(a)$ in dimension $n \ge 3$ with arbitrarily large $b_2$ by smoothing algebraic SNC varieties. 
Note that Clemens manifolds have algebraic dimension $0$ since the Picard rank is $0$, while some of our examples $X(a)$ have algebraic dimension $n -2$ (cf. \cite[Theorem 1.1]{MR4406696}). 
They are expected to have nice cohomological properties since they are constructed from algebraic objects. 
The motivation of this note is to show that those examples also satisfy nice Hodge theoretic properties as Clemens manifolds and investigate what we really need for such statements. 

\begin{set}\label{set:intro}
Let $X= \bigcup_{i=1}^m X_i$  be a {\it proper SNC variety}, that is, a proper simple normal crossing $\bC$-scheme which is a union of smooth proper varieties $X_i$'s.  We consider its {\it semistable smoothing} $\phi \colon \cX \to \Delta$, 
that is, $\phi$ is a proper surjective holomorphic map from a complex manifold $\cX$ to a $1$-dimensional disk $\Delta$ such that $\phi^{-1}(0) \simeq X$. Let $n:= \dim X$. 

On a general smooth fiber $X_t:= \phi^{-1}(t)$ for $0 \neq t \in \Delta$, for $k >0$, 
it is known that the cohomology group $H^k:=H^k(X_t, \bC)$ admits a limiting mixed Hodge structure $(H^k, W_{\bullet}, F^{\bullet})$ constructed by Steenbrink \cite{MR0429885}. 
One of the important properties is that the monodromy logarithm $N_k:= \log T_k \colon H^k \to H^k$ 
is a morphism of mixed Hodge structures of type $(-1,-1)$. As a consequence, for $i >0$,  
it induces a homomorphism 
\[
N_k^{[i]} \colon \Gr^W_{k+i} H^k \to \Gr^W_{k-i} H^k, 
\] 
where $\Gr^W_l H^k:= W_l/W_{l-1}$ is the associated graded piece for the weight filtration $W_{\bullet} \subset H^k$. 
\end{set}

\begin{rem}
We often assume that $X$ is {\it without triple intersection}, that is, $X_i \cap X_j \cap X_k = \emptyset$ for all $i< j < k$ for simplicity, but it is still quite general setting. 
When $X$ is without triple intersections, the weight filtraions on $H^k$ are of the form $0=W_{k-2} \subset W_{k-1} \subset W_k \subset W_{k+1}= H^k$, thus only $N_k^{[1]}$ can be non-zero.  
\end{rem}

It is known that $N_k^{[i]}$ is an isomorphism for all $k$ and $i$ (see \cite[(5.9) Theorem]{MR0429885} and \cite[(5.2) Th\'{e}or\`{e}me]{MR1068383}) when $\phi$ is projective (or $X$ is cohomologically K\"{a}hler in the sense of \cite[(3.1)]{MR1068383}). In general, $N_k^{[i]}$ is not an isomorphism when $\phi$ is not projective. 
However, if it is an isomorphism, then we have the following. 

\begin{thm}\label{thm:main}
Let $X = \bigcup_{i=1}^m X_i$ be a proper SNC variety without triple intersection.  
Let $\phi \colon \cX \rightarrow \Delta$, $X_t$ and other notations be as in Settings \ref{set:intro}. 
Let $k>0$ and assume that $N_k^{[1]} \colon \Gr^W_{k+1}H^k \to \Gr^W_{k-1} H^k$ is an isomophism. 

Then $H^k = H^k(X_t, \bC)$ admits a pure Hodge structure (induced as in Proposition \ref{prop:pureHodgekopposed} and Remark \ref{rem:putativeHodge}(i)) for a small $t \in \Delta^* = \Delta \setminus \{0 \}$.  
\end{thm}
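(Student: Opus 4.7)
The plan is to verify that a pure Hodge structure of weight $k$ on $H^k(X_t,\bC)$ is obtained by taking the limit Hodge filtration $F_{\mathrm{lim}}^\bullet$ of the LMHS as the putative Hodge filtration, and showing that $(F_{\mathrm{lim}}^\bullet,\overline{F_{\mathrm{lim}}^\bullet})$ are $k$-opposed in the sense of Proposition \ref{prop:pureHodgekopposed}. Since $X$ is without triple intersections, the weight filtration reduces to $0\subset W_{k-1}\subset W_k\subset W_{k+1}=H^k$, and the Deligne bigrading of the LMHS
\[
H^k=\bigoplus_{p+q\in\{k-1,k,k+1\}}I^{p,q},\qquad \overline{I^{p,q}}=I^{q,p}
\]
splits $H^k$ into three "rows," with the outer two rows exchanged by complex conjugation within each row. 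The hypothesis that $N_k^{[1]}$ is an isomorphism is precisely the statement that the $(-1,-1)$-morphism $N_k$ identifies the weight $k+1$ row with the weight $k-1$ row via $N_k\colon I^{p,q}\xrightarrow{\sim} I^{p-1,q-1}$.

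The heart of the argument is then the dimension count for $k$-opposedness. For each $p$, I would expand $\dim F_{\mathrm{lim}}^p$ and $\dim \overline{F_{\mathrm{lim}}^{k-p+1}}$ in terms of the Hodge--Deligne numbers $i^{p,q}:=\dim I^{p,q}$, and likewise identify the intersection $F_{\mathrm{lim}}^p\cap \overline{F_{\mathrm{lim}}^{k-p+1}}$ with the sum of those $I^{p',q'}$ satisfying $p'\geq p$ and $q'\geq k-p+1$. A direct check shows that in weights $k-1$ and $k$ no such piece occurs, and in weight $k+1$ the only contribution is the anti-diagonal term $I^{p,\,k-p+1}$. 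Combining this with the conjugation symmetry in the outer rows and the $N_k$-matching between them, a weight-by-weight bookkeeping yields $F_{\mathrm{lim}}^p\oplus \overline{F_{\mathrm{lim}}^{k-p+1}}=H^k$ for every $p$. By Proposition \ref{prop:pureHodgekopposed}, this is exactly the $k$-opposedness criterion, so $(H^k,F_{\mathrm{lim}}^\bullet)$ carries a pure Hodge structure of weight $k$.

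The obstacle I anticipate is twofold. First, the bookkeeping of the cancellations across the $k-1$ and $k+1$ rows must be done carefully: the hypothesis ensures that the excess from one outer row is exactly absorbed by the other under $N_k$, but the detailed interplay with the complex conjugation $\overline{I^{p,q}}=I^{q,p}$ (which acts \emph{within} each row, whereas $N_k$ moves \emph{between} rows) is the part where one can easily err. Second, one must transport this pure Hodge structure from the abstract limit space to the actual cohomology $H^k(X_t,\bC)$ of a nearby fiber: this uses the canonical isomorphism between $H^k(X_t,\bC)$ and the limit vector space together with the identification of $F_{\mathrm{lim}}^\bullet$ with the specialization of the fiberwise Hodge filtration, via the canonical extension and the $E_1$-degeneration of the relative log Hodge-to-de Rham spectral sequence for the semistable family $\phi$. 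This identification is then precisely the content of Remark \ref{rem:putativeHodge}(i), producing the desired pure Hodge structure on $H^k(X_t,\bC)$ for all sufficiently small $t\in\Delta^\ast$.
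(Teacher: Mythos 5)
Your proposal contains a genuine gap, and it is located exactly at the point you flag as delicate. The limit Hodge filtration $F_{\lim}^{\bullet}$ is in general \emph{not} $k$-opposed, and no bookkeeping of Hodge--Deligne numbers can make it so: as you yourself observe, $F_{\lim}^p\cap\ol{F_{\lim}^{k-p+1}}$ picks up the anti-diagonal piece coming from $I^{p,k+1-p}$ in weight $k+1$, which is nonzero in the cases of interest, so the sum $F_{\lim}^p+\ol{F_{\lim}^{k-p+1}}$ cannot be direct. The hypothesis that $N_k^{[1]}$ is an isomorphism only guarantees the dimension identity $\dim F_{\lim}^p+\dim\ol{F_{\lim}^{k-p+1}}=\dim H^k$ (the deficit $\dim I^{p-1,k-p}$ in weight $k-1$ is matched by the overlap $\dim I^{p,k+1-p}$ in weight $k+1$); it does not remove the overlap. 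A concrete failure already occurs for a Tate degeneration of elliptic curves with $\bR$-split limit mixed Hodge structure: there $F_{\lim}^1=\ol{F_{\lim}^1}$ is a line, so $1$-opposedness fails for $F_{\lim}$ even though $N$ is an isomorphism $\Gr^W_2\to\Gr^W_0$.

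The second half of your plan is also not correct as stated: the canonical identification of $H^k(X_t,\bC)$ with the limit space does \emph{not} carry the fiberwise Hodge filtration $F_t^{\bullet}$ to $F_{\lim}^{\bullet}$; rather $F_t^{\bullet}$ is asymptotic to $e^{zN}F_{\lim}^{\bullet}$ with $z=\log t/2\pi\sqrt{-1}$, and $F_{\lim}$ is the limit of the \emph{untwisted} filtrations on the universal cover. This twist is not a technicality --- it is the entire mechanism of the proof. In the paper, one extends a Deligne-adapted basis $u^{i,j}_l$ of the $I^{i,j}$ to holomorphic sections of $\cF^p\subset\cH^k$, writes $U^{i,j}_l(z)=e^{zN}u^{i,j}_l+O(y^{-1})$, and checks that the top wedge product of bases of $F^p_t$ and $\ol{F^{k-p+1}_t}$ equals $\pm(z-\ol{z})^{h^{p-1,k-p}}(1+O(y^{-1}))$ times a fixed nonzero top form. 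It is precisely the factor $z-\ol{z}=2\sqrt{-1}\,\Image z$, produced by pairing $u^{p,k+1-p}_l+z\,u^{p-1,k-p}_l$ against its conjugate and made available by the isomorphism $N\colon I^{p,k+1-p}\xrightarrow{\sim}I^{p-1,k-p}$, that resolves the overlap your computation detects; it is nonzero only because $\Image z\to\infty$ as $t\to0$. Without this asymptotic analysis of the twisted filtration the argument does not close.
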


Although it is not easy to check that $N_k^{[i]}$ is an isomorphism in general, we have the following criterion on $H^n$ ($n=\dim X$) and its consequence. 

\begin{thm}\label{thm:intro2} 
Let $X = \bigcup_{i=1}^m X_i, \cX \to \Delta$ and others be as in Theorem \ref{thm:main}. 
Let $n:= \dim X$, $X^{(1)}:= \coprod_i X_i$ and $X^{(2)}:= \coprod_{i < j} X_i \cap X_j$ be the double locus of $X$. 
Assume that the cup product is non-degenerate on $\Image \rho_{n-1} \subset H^{n-1}(X^{(2)}, \bC)$, where $\rho_{n-1} 
\colon H^{n-1}(X^{(1)}, \bC) \to H^{n-1}(X^{(2)}, \bC)$ is the restriction map as in Definition \ref{defn:RestGysin}. 

Then $N_n^{[1]}$ is an isomorphism.  
As a consequence,  $H^n(X_t, \bC)$ admits a pure Hodge structure (induced as in Proposition \ref{prop:pureHodgekopposed} and Remark \ref{rem:putativeHodge}(i)) for a small $t \in \Delta^*$.  
\end{thm}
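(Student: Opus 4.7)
The plan is to reduce Theorem \ref{thm:intro2} to Theorem \ref{thm:main} by establishing that the non-degeneracy of the cup product on $\Image \rho_{n-1}$ forces $N_n^{[1]}$ to be an isomorphism; the pure Hodge structure on $H^n(X_t,\bC)$ then follows immediately from Theorem \ref{thm:main}.

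The first step is to unpack the Steenbrink weight spectral sequence in the no-triple-intersection setting. Since only $X^{(1)}$ and $X^{(2)}$ contribute and the weight filtration on $H^n$ has only the three nontrivial graded pieces at weights $n-1, n, n+1$, the two extremal pieces should be identifiable as
\[
\Gr^W_{n+1} H^n \;\cong\; \Ker\bigl(\gamma_{n-1}\colon H^{n-1}(X^{(2)}) \to H^{n+1}(X^{(1)})\bigr),
\]
\[
\Gr^W_{n-1} H^n \;\cong\; \Coker\bigl(\rho_{n-1}\colon H^{n-1}(X^{(1)}) \to H^{n-1}(X^{(2)})\bigr),
\]
with the appropriate Tate twists, where $\gamma_{n-1}$ denotes the Gysin map paired with $\rho_{n-1}$ in Definition \ref{defn:RestGysin}. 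Tracing Steenbrink's description of the monodromy on the $E_1$-page, $N_n^{[1]}$ should be identified with the map induced by the identity on $H^{n-1}(X^{(2)})$, namely the natural composition
\[
\Ker(\gamma_{n-1}) \hookrightarrow H^{n-1}(X^{(2)}) \twoheadrightarrow \Coker(\rho_{n-1}).
\]

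The second step is essentially formal Poincar\'e duality. Since $X^{(2)}$ is smooth and proper of complex dimension $n-1$, the cup product is a perfect pairing on $H^{n-1}(X^{(2)},\bC)$, and Poincar\'e duality on $X^{(1)}$ identifies $\gamma_{n-1}$ as the transpose of $\rho_{n-1}$. Hence $\Ker(\gamma_{n-1}) = (\Image \rho_{n-1})^{\perp}$, so the composition above is an isomorphism if and only if $(\Image \rho_{n-1})^{\perp} \cap \Image \rho_{n-1} = 0$. This vanishing is equivalent to the non-degeneracy of the cup product restricted to $\Image \rho_{n-1}$, which is the hypothesis; hence Theorem \ref{thm:main} applies and delivers the pure Hodge structure on $H^n(X_t,\bC)$.

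The main obstacle I foresee is the bookkeeping in the first step: matching Steenbrink's combinatorial formula for $N$ on the weight spectral sequence with the naive identity identification on $H^{n-1}(X^{(2)})$, including all Tate twists and signs, so that $N_n^{[1]}$ indeed factors as the natural composition displayed above. Once that identification is secured, the Poincar\'e-duality step reduces to purely formal linear algebra.
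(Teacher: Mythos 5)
Your proposal is correct and follows essentially the same route as the paper: the paper's Proposition \ref{prop:H3CYdesc}(ii)--(iii) identifies $\Gr^W_{n+1}H^n\cong\Ker\gamma_{n+1}$ and $\Gr^W_{n-1}H^n\cong\Coker\rho_{n-1}$, shows $N_n^{[1]}$ is induced by the identity on $H^{n-1}(X^{(2)},\bQ)(-1)$, and uses the adjointness of $\rho_{n-1}$ and the Gysin map (your $\gamma_{n-1}$ is the paper's $\gamma_{n+1}$, indexed by target degree) to reduce to the same linear-algebra fact that $(\Image\rho_{n-1})^{\perp}\to\Coker\rho_{n-1}$ is an isomorphism exactly when the cup product is non-degenerate on $\Image\rho_{n-1}$; Theorem \ref{thm:main} then gives the pure Hodge structure. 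The ``bookkeeping'' you flag is handled in the paper at the same level of detail, so there is no gap.
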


The examples constructed in \cite{MR4584262} are also covered as a consequence of the following.  

\begin{cor}\label{cor:Tyurincase}
Let $X=X_1 \cup X_2$ be a proper SNC 3-fold such that $X_1$ and $X_2$ are smooth projective and $D:= X_1 \cap X_2$ is irreducible. 
Assume that there is a semistable smoothing $\phi \colon \cX \to \Delta$ of $X$ and let $X_t := \phi^{-1}(t)$ be its general fiber over a small $t \in \Delta^*$.  Then the following holds. 

\begin{enumerate}
\item[(i)] The cohomology group $H^3(X_t, \bC)$ admits a pure Hodge structure.  
\item[(ii)] In particular, the non-K\"{a}hler Calabi--Yau 3-folds constructed in \cite{MR4584262} and \cite{Sano:2022aa} satisfy the $\partial\ol{\partial}$-lemma. 
\end{enumerate}
\end{cor}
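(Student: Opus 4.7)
The plan for part (i) is to apply Theorem \ref{thm:intro2} with $n = 3$. Under the hypotheses here, $X^{(2)} = D$ is a single smooth projective surface, so it suffices to verify that the cup product $Q(u,v) = \int_D u \cup v$ is non-degenerate on $V := \Image \rho_2 \subset H^2(D, \bC)$. Two observations do the work: first, $\rho_2$ is a morphism of pure weight-$2$ $\bQ$-Hodge structures, so $V$ is a $\bQ$-sub-Hodge structure of $H^2(D, \bQ)$; second, $V$ contains a K\"{a}hler class, namely $h_i|_D$ for any ample $h_i \in H^2(X_i, \bQ)$ (pullback along the smooth inclusion $D \hookrightarrow X_i$ preserves ampleness). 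With these in place, set $\omega := h_1|_D$ and split $V = \bC \omega \oplus V_{\mathrm{prim}}$, where $V_{\mathrm{prim}} := V \cap \omega^\perp \subset H^2_{\mathrm{prim}}(D, \bC)$. Then $Q(\omega, \omega) = \omega^2 > 0$, and by the Hodge--Riemann bilinear relations on $D$, the form $Q$ polarizes $H^2_{\mathrm{prim}}(D, \bQ)$; its restriction to any $\bQ$-sub-HS, in particular $V_{\mathrm{prim}}$, remains a polarization and hence non-degenerate. Therefore $Q|_V$ is non-degenerate, verifying the hypothesis of Theorem \ref{thm:intro2}, which yields the pure Hodge structure on $H^3(X_t, \bC)$.

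The plan for part (ii) is to combine (i) with the Deligne--Griffiths--Morgan--Sullivan characterization: for a compact complex manifold, the $\partial\bar\partial$-lemma is equivalent to every de Rham cohomology group admitting a pure Hodge structure of the appropriate weight with Hodge symmetry. For the non-K\"{a}hler Calabi--Yau 3-folds $X_t$ of \cite{MR4584262} and \cite{Sano:2022aa}, the cases $k = 0, 6$ are trivial; $k = 1, 5$ vanish because $b_1(X_t) = 0$ as established in the construction papers; $k = 3$ is given by (i); and the Poincar\'{e}-dual cases $k = 2, 4$ reduce to a pure Hodge structure on $H^2$. The latter should be obtained via Theorem \ref{thm:main} applied at $k = 2$ --- after verifying $N_2^{[1]}$ is an isomorphism, which in these examples follows from explicit knowledge of the limiting mixed Hodge structure on $H^2$ (the two graded pieces either vanish or are directly identified from the SNC data).

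The main obstacle is the non-degeneracy step in (i). The cleanest path is the sub-Hodge-structure argument sketched above, which leverages the general principle that polarizations restrict to polarizations on $\bQ$-sub-HS's containing a K\"{a}hler class. The irreducibility of $D$ is used to keep $X^{(2)}$ connected, so that the Poincar\'{e} pairing lives on a single surface and $\rho_2$ avoids combinatorial bookkeeping across components. Part (ii) then contains little extra content beyond (i), relying on ingredients (the vanishing $b_1(X_t) = 0$ and control of the limiting MHS on $H^2$) that are already present in the construction papers; (i) supplies the one missing piece, namely purity on $H^3$.
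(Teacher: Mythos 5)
Part (i) of your argument is essentially the paper's proof. The paper likewise reduces to the non-degeneracy hypothesis of Theorem \ref{thm:intro2} (with $n=3$, so the relevant map is $\rho_2$), observes that $\Image\rho_2$ contains the ample class $L=L_1|_D$ for $L_1$ ample on $X_1$, and splits $\Image\rho_2=\bC L\oplus(L^{\perp}\cap\Image\rho_2)$; it then quotes Remark \ref{rem:LefschetzCompati}, whose proof is precisely the ``a polarization restricts to a polarization on a sub-Hodge structure compatible with the Lefschetz decomposition'' argument that you spell out by hand. So for (i) you have simply inlined the Remark; the content is identical and correct.

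For (ii) you take a genuinely different route. The paper cites Friedman's result \cite[Corollary 1.6]{MR4085665}: for a compact complex $3$-fold with $H^i(X,\cO_X)=0=H^0(X,\Omega_X^i)$ for $i=1,2$, the $\partial\bar\partial$-lemma follows from the $3$-opposedness on $H^3$ established in (i), so (ii) is a one-line consequence. You instead propose to verify purity degree by degree via the Deligne--Griffiths--Morgan--Sullivan criterion. That route can be made to work, but as written it leaves several steps unverified: you omit the $E_1$-degeneration half of the criterion (it is supplied by Theorem \ref{thm:hodgejunbi}(iv), but you should say so); the reduction of $k=4,5$ to $k=2,1$ is asserted via ``Poincar\'e duality'' without justification; and the key step for $k=2$ --- that $N_2^{[1]}$ is an isomorphism --- is deferred to ``explicit knowledge of the limiting MHS.'' In the examples at hand this last point does hold for a cheap reason: the double locus $D$ satisfies $H^1(D,\bQ)=0$, so by (the $k=2$ case of) Proposition \ref{prop:H3CYdesc}(i)--(ii) both $\Gr^W_1H^2$ and $\Gr^W_3H^2$ vanish and $N_2^{[1]}$ is vacuously an isomorphism; note that the vanishing $H^i(X_t,\cO)=H^0(X_t,\Omega^i)=0$ for $i=1,2$ which the paper feeds into Friedman's corollary is essentially the same input your degree-by-degree check needs. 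Net assessment: your (ii) is more self-contained in principle but longer and currently incomplete at the points above, whereas the paper's is shorter because it outsources the low-degree bookkeeping to \cite{MR4085665}.
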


While Theorem \ref{thm:main} is proved without the assumption on triple intersection in \cite{Chen:2024aa}, 
Theorem \ref{thm:intro2} and Corollary \ref{cor:Tyurincase}(i) are new parts of this note. 

\vspace{2mm}

Moreover, we obtain the Hodge--Riemann relation on $H^3$ of a smoothing of a proper SNC 3-fold with some conditions as follows. 

\begin{thm}\label{thm:Hodge-RiemannH^3}
Let $ \phi \colon \cY \to \Delta$ be a semistable smoothing of a proper SNC variety $Y= \bigcup_{j=1}^m Y_j$ without triple intersection such that 
$\dim Y =3$.  
Let $\rho_l \colon H^l(Y^{(1)}, \bQ) \to H^l(Y^{(2)}, \bQ)$ be the restriction map and $\gamma_l \colon H^{l-2}(Y^{(2)}, \bQ)(-1) \to H^l(Y^{(1)}, \bQ)$ be 
the Gysin map as in Definition \ref{defn:RestGysin}. Assume the following condition: 
\begin{itemize}
\item[(*)] The pairings $Q_{W_3}$ and $Q_{W_2}$ on $(\Ker \rho_3/ \Image \gamma_3)_{\bC}^{2,1}$ and $(\Ker \gamma_4)_{\bC}^{1,1}$ are positive definite  (see Remark \ref{rem:condition*explain} for the detail).
\end{itemize}
Let $Y_s = \phi^{-1}(s)$ be a fiber of $\phi$ over a small $s\neq 0$. 
For the cup product $\langle \bullet , \bullet \rangle$ on $H^3:=H^3(Y_s, \bC)$, 
let  $Q(\bullet, \bullet):= - \sqrt{-1}\langle \bullet , \bullet \rangle$. 

Then $H^3(Y_s)$ admits a pure Hodge structure and we have  $Q (\eta, \ol{\eta})  >0$ for $0 \neq \eta \in H^{2,1}_s:= H^{2,1}(Y_s, \bC)$. Hence $Q$ is a polarization on $H^3(Y_s)$. 
\end{thm}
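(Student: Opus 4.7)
The strategy is first to obtain a pure Hodge structure on $H^3(Y_s, \bC)$ via Theorem \ref{thm:intro2}, and then to establish the polarization by decomposing the $(2,1)$-part along the weight filtration of the LMHS and using (*) on each piece.

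First I would verify the hypothesis of Theorem \ref{thm:intro2}, namely non-degeneracy of the cup product on $\Image \rho_2 \subset H^2(Y^{(2)}, \bC)$. Each component of $Y^{(2)}$ is a smooth projective surface, so Poincar\'e duality identifies $\gamma_4$ with the transpose of $\rho_2$ up to a Tate twist, giving $\Ker \gamma_4 = (\Image \rho_2)^{\perp}$ inside $H^2(Y^{(2)})$. The positive definiteness of $Q_{W_2}$ on $(\Ker \gamma_4)_{\bC}^{1,1}$ assumed in (*), together with the Hodge decomposition on $H^2(Y^{(2)})$ and complex conjugation, forces $\Image \rho_2 \cap \Ker \gamma_4 = 0$, which is exactly non-degeneracy of the cup product on $\Image \rho_2$. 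Hence Theorem \ref{thm:intro2} applies and equips $H^3(Y_s, \bC)$ with a pure Hodge structure of weight $3$, so that $H^{2,1}(Y_s, \bC)$ is well-defined.

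Next, using the construction of Proposition \ref{prop:pureHodgekopposed} together with the isomorphism $N_3^{[1]} \colon \Gr^W_4 H^3 \xrightarrow{\sim} \Gr^W_2 H^3$, I would decompose
\[
H^{2,1}(Y_s, \bC) \simeq A_3 \oplus A_4 \oplus A_2,
\]
where $A_3 \simeq (\Ker \rho_3 / \Image \gamma_3)_{\bC}^{2,1}$ is the middle, genuinely weight-$3$ piece, $A_4 \simeq (\Ker \gamma_4)_{\bC}^{1,1}$ is the weight-$4$ contribution (after the appropriate Tate shift), and $A_2 = N_3^{[1]}(A_4) \subset \Gr^W_2 H^3$ is the weight-$2$ contribution. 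For $\eta = \eta_2 + \eta_3 + \eta_4$ in this decomposition, I would then expand
\[
Q(\eta, \ol{\eta}) = \sum_l Q(\eta_l, \ol{\eta_l}) + \sum_{i \neq j} Q(\eta_i, \ol{\eta_j}),
\]
and show that the cross terms vanish and each diagonal term is positive. The term $Q(\eta_3, \ol{\eta_3}) > 0$ comes from positivity of $Q_{W_3}$ on $A_3$, while $Q(\eta_4, \ol{\eta_4}) > 0$ and $Q(\eta_2, \ol{\eta_2}) > 0$ both come from positivity of $Q_{W_2}$ on $(\Ker \gamma_4)_{\bC}^{1,1}$ (for $A_2$ after transport via $N_3^{[1]}$), so $Q(\eta, \ol{\eta}) > 0$ for $\eta \neq 0$.

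The main obstacle is showing that the cup product on $H^3(Y_s)$ splits, up to the canonical identifications, as a direct sum of pairings on $A_2$, $A_3$, $A_4$, and that the restriction to each piece coincides with the stated $Q_{W_l}$. This is a polarization-of-LMHS statement in the spirit of Cattani--Kaplan--Schmid: the nearby cup product must converge to an $N$-compatible pairing that polarizes each $\Gr^W_l H^3$, with the $(\Gr^W_2, \Gr^W_4)$-pairing reassembled into $Q_{W_2}$ via $N_3^{[1]}$. Since $Y_s$ is not assumed K\"ahler, the classical polarized-LMHS theorem does not apply directly, and the compatibility will have to be established by tracing the cup product through the Steenbrink mixed Hodge complex and using the monodromy weight filtration together with the $N$-iso hypothesis; this is the key technical step that bridges (*) and the desired Hodge--Riemann positivity.
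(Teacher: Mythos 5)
Your first step --- deducing non-degeneracy of the cup product on $\Image \rho_2$ from condition (*) and then invoking Theorem \ref{thm:intro2} to obtain the pure Hodge structure --- matches the paper's argument; the paper phrases it as non-degeneracy on $(\Image \rho_2)^{\perp}=\Ker\gamma_4$ followed by Lemma \ref{lem:bilinearortho}, and you should also note that the $(2,0)$- and $(0,2)$-parts of $\Ker\gamma_4$ are primitive, since (*) only controls the $(1,1)$-part. The positivity argument, however, contains a genuine gap, and the mechanism you sketch cannot work as stated. The cup product $H^3\otimes H^3\to H^6\simeq \bQ(-3)$ is a morphism of mixed Hodge structures into a pure weight-$6$ target, so $W_a\cup W_b=0$ whenever $a+b<6$; in particular $W_2$ is orthogonal to all of $W_3\supset W_2$. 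Since your $A_2=N_3^{[1]}(A_4)$ lies in $\Gr^W_2=W_2$ and conjugation preserves $W_2$, the diagonal term $Q(\eta_2,\ol{\eta_2})$ is \emph{zero}, not positive, and there is no exact orthogonal splitting of $Q$ into $Q_{W_2}\oplus Q_{W_3}\oplus Q_{W_2}$ on $A_2\oplus A_3\oplus A_4$. The only nontrivial pairing involving the non-middle weights is the cross-pairing $\Gr^W_4\times\Gr^W_2\to\bC$, and condition (*) controls precisely that pairing (transported by $N$, which acts as $2\pi\sqrt{-1}\,\id$ between these graded pieces).

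What actually makes the argument close is an asymptotic computation, not an orthogonal decomposition. The paper chooses a basis $u^{i,j}_{\alpha}$ of the Deligne pieces $I^{i,j}$ normalized against the cup product (Claim \ref{claim:ualphabetaproducts}, proved by tracing the pairing through the cohomological mixed Hodge complex --- this is exactly the step you flag as your unresolved ``main obstacle,'' so it must genuinely be carried out), extends it to holomorphic frames $U^{i,j}_{\alpha}(z)$ of $\cF^2$ together with the corrected sections $V^{2,0}_{\alpha}(z)=U^{2,0}_{\alpha}(z)-2zU^{3,1}_{\alpha}(z)$, and computes, e.g., $Q\bigl(U^{2,2}_{\alpha}(z),\ol{U^{2,2}_{\beta}(z)}\bigr)=zQ(u^{1,1}_{\alpha},\ol{u^{2,2}_{\beta}})+\ol{z}Q(u^{2,2}_{\alpha},\ol{u^{1,1}_{\beta}})+\cdots=2y\bigl(\delta_{\alpha\beta}+O(y^{-1})\bigr)$ with $y=\operatorname{Im}z$. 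The Gram matrix is $\mathrm{diag}(E,\,2yE)+O(y^{-1})$: the weight-$4$/weight-$2$ contributions are positive only because they are amplified by the divergent factor $2y$, the cross terms do not vanish but are merely $O(y^{-1})$, and positivity holds only for $y\gg 0$, i.e.\ for $s$ sufficiently small. Finally, rather than decomposing $H^{2,1}_s$ directly (which is hard to identify in terms of the limit data), the paper runs a signature count on $F^2_s$: $Q$ is negative definite on $F^3_s=H^{3,0}_s$, so exhibiting a positive definite subspace of $F^2_s$ of dimension $\dim F^2_s/F^3_s$ forces positivity on $H^{2,1}_s$. Without these ingredients your outline does not yet constitute a proof.
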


As a consequence, the Hodge--Riemann bilinear relation holds on $H^3(Y_s)$ in Theorem \ref{thm:Hodge-RiemannH^3} since the relation is well-known on $H^{3,0}$.  
As a corollary of Theorem \ref{thm:Hodge-RiemannH^3}, we have the following. 

\begin{cor}\label{cor:Tyurinh^1=0case}
Let $Y = Y_1 \cup Y_2$ be a proper SNC variety such that $\dim Y=3$, $Y_j$ is projective with $H^1(Y_j, \cO_{Y_j}) =0$ for $j=1,2$, and $Y_1 \cap Y_2$ is irreducible. 
Let $\cY \to \Delta$ be a semistable smoothing of $Y$ and $Y_s$ be its general fiber over a small $s \in \Delta^*$.     

Then $H^3(Y_s, \bC)$ admits a pure Hodge structure and we have  $Q (\eta, \ol{\eta})  >0$ for $0 \neq \eta \in H^{2,1}_s:= H^{2,1}(Y_s, \bC)$. In particular, we have the Hodge--Riemann relation on 
$H^3$ of the non-K\"{a}hler Calabi--Yau 3-folds constructed in \cite{MR4584262}. 
\end{cor}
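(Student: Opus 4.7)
The plan is to deduce this from Theorem \ref{thm:Hodge-RiemannH^3} by verifying its condition (*) under the added hypotheses.

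First, I would translate the hypothesis $H^1(Y_j, \cO_{Y_j}) = 0$ into topological vanishings. Since $Y_j$ is smooth projective, Hodge symmetry gives $h^{0,1}(Y_j) = h^{1,0}(Y_j) = 0$, so $H^1(Y_j, \bC) = 0$ and hence $H^1(Y^{(1)}, \bC) = 0$. As $Y^{(2)} = D := Y_1 \cap Y_2$ is irreducible, the maps $\rho_l$ and $\gamma_l$ of Definition \ref{defn:RestGysin} reduce to the difference of pullbacks and the signed direct sum of Gysin pushforwards along the inclusions $i_j \colon D \hookrightarrow Y_j$.

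Next, I would analyze the two spaces appearing in condition (*). For $(\Ker \gamma_4)^{1,1}$: the kernel of $\gamma_4 \colon H^2(D)(-1) \to H^4(Y_1) \oplus H^4(Y_2)$ consists of classes $c \in H^2(D)$ with $i_{1*}c = i_{2*}c = 0$, and I expect its $(1,1)$-part to land in the primitive $H^{1,1}(D)_{\prim}$ for a suitable polarization of $D$; the pairing $Q_{W_2}$ then matches, up to the standard sign, the classical Hodge--Riemann form there, which is positive definite on the smooth projective surface $D$. For $(\Ker \rho_3 / \Image \gamma_3)^{2,1}$: combining the vanishing $H^1(Y^{(1)}, \bC) = 0$ with the weight spectral sequence relating $H^*(Y^{(1)})$ and $H^*(D)$ to $\Gr^W_{\bullet} H^3(Y_s)$ expresses this subquotient in terms of the Hodge structures on the smooth projective threefolds $Y_j$; the form $Q_{W_3}$ on it should match the classical cup product on the primitive $H^{2,1}$ of the $Y_j$, where positivity is a standard consequence of the Hodge--Riemann bilinear relations.

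The main obstacle is the precise bookkeeping of sign conventions used to define $Q_{W_2}$ and $Q_{W_3}$ via the limiting mixed Hodge structure formalism (as recalled in Remark \ref{rem:condition*explain}), so that one can correctly identify them with the classical Hodge--Riemann forms on the smooth projective pieces. Once (*) is established, Theorem \ref{thm:Hodge-RiemannH^3} immediately yields both the pure Hodge structure on $H^3(Y_s, \bC)$ and the positivity $Q(\eta, \overline{\eta}) > 0$ for $0 \neq \eta \in H^{2,1}_s$; the final statement about the examples of \cite{MR4584262} follows since those satisfy $H^1(Y_j, \cO_{Y_j}) = 0$ by construction.
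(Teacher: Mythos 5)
Your proposal is correct and follows essentially the same route as the paper: the paper also deduces the corollary by verifying condition (*) of Theorem \ref{thm:Hodge-RiemannH^3}, using $H^1(Y_j,\cO_{Y_j})=0$ to make $H^{2,1}(Y_j)$ primitive (giving definiteness of $Q_{W_3}$) and the fact that $\Ker\gamma_4=(\Image\rho_2)^{\perp}$ contains no multiple of the restricted ample class, so $(\Ker\gamma_4)^{1,1}$ is primitive in $H^{1,1}(D)$ and the Hodge index theorem gives definiteness of $Q_{W_2}$ (this is exactly the content of Remark \ref{rem:definitecondition}, which the paper cites). The sign bookkeeping you flag is handled in Remark \ref{rem:condition*explain} and is not a genuine gap.
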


See \cite{Chen:2024aa} and \cite{Lee:2024aa} for more results in this direction. 
The strategy of the proof of Theorems \ref{thm:main} and \ref{thm:Hodge-RiemannH^3} follow those in 
\cite{MR4085665} and \cite{MR4735955} 
which use the limiting mixed Hodge structures on semistable smoothings in an effective way. 

\vspace{2mm}

In Section 2, we summarize the necessary materials on Hodge theory. 
In Section 3, we give the proof of Theorems \ref{thm:main} and \ref{thm:Hodge-RiemannH^3}. 
In Section 4, we exhibit some examples and investigate the Lefschetz line bundles and the Hodge--Riemann line bundles (cf. \cite{MR3224564}, \cite{MR4563867}) 
on non-K\"{a}hler Calabi--Yau manifolds. 
In particular, we show that the examples in \cite{MR4584262} satisfy the Hard Lefschetz property by exhibiting Lefschetz line bundles on them (Example \ref{eg:HashimotoSano}). 

\begin{rem}
When the author was finishing the manuscript, 
the preprints  \cite{Chen:2024aa} and \cite{Lee:2024aa} appeared on arXiv. 
The results are obtained independently. 
\end{rem}

\section{Preliminaries}

As is well-known, the existence of a pure Hodge structure is equivalent to the $k$-opposed property of the Hodge filtration (cf. \cite{MR0498551}). 

\begin{prop}\label{prop:pureHodgekopposed}
Let $k \in \bZ_{\ge 0}$ and let $H^k$ be a finite dimensional $\bC$-vector space. 
Let $F^{\bullet} \subset H^k$ be a decreasing filtration of linear subspaces such that 
\[
H^k=F^0 \supset F^1 \supset \cdots \supset F^k \supset F^{k+1}=0 
\]
 and let $H^{p,k-p}:= F^p \cap \ol{F^{k-p}}$ for $p \in \bZ$. 
Then the following are equivalent. 
\begin{enumerate}
\item[(i)]The subspaces $\{ H^{p,k-p} \mid 0 \le p \le k \}$ define a pure Hodge structure of weight $k$ on $H^k$, 
that is,   
\begin{equation}\label{eq:Hodgedecomp}
H^k = \bigoplus_{p=0}^k H^{p,k-p}, \ \ \ol{H^{p,k-p}} = H^{k-p,p}. 
\end{equation}
\item[(ii)] $H^k = F^p \oplus \ol{F^{k-p+1}}$ for all $p$ (that is, the filtration $F^{\bullet}$ is ``$k$-opposed'').  
\end{enumerate}
\end{prop}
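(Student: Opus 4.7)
The plan is to prove both implications by direct manipulations with the filtration and its complex conjugate, using only that $F^{\bullet}$ has length $k+1$ (so $F^{k+1}=0$) and that complex conjugation distributes over intersections of subspaces.

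For the direction (i) $\Rightarrow$ (ii), I would first establish $F^p \cap \overline{F^{k-p+1}} = 0$. Any vector in this intersection sits in $F^p \cap \overline{F^{k-p}} = H^{p,k-p}$ (since $F^{k-p+1} \subset F^{k-p}$) and simultaneously in $F^{p-1} \cap \overline{F^{k-p+1}} = H^{p-1,k-p+1}$ (since $F^p \subset F^{p-1}$), so the Hodge decomposition forces it to vanish. The other containment $F^p + \overline{F^{k-p+1}} = H^k$ then follows by checking that each summand $H^{q,k-q}$ lies in one of the two pieces: in $F^q \subset F^p$ when $q \geq p$, and in $\overline{F^{k-q}} \subset \overline{F^{k-p+1}}$ when $q \leq p-1$.

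For the direction (ii) $\Rightarrow$ (i), the conjugate symmetry $\overline{H^{p,k-p}} = H^{k-p,p}$ is immediate since $\overline{F^p \cap \overline{F^{k-p}}} = \overline{F^p} \cap F^{k-p}$. The decomposition $H^k = \bigoplus_p H^{p,k-p}$ I would prove by downward induction on $p$, using the refined statement $F^p = \bigoplus_{q \geq p} H^{q,k-q}$ with base case $F^{k+1} = 0$. For the inductive step, applying (ii) with $p+1$ in place of $p$ gives $H^k = F^{p+1} \oplus \overline{F^{k-p}}$; then any $x \in F^p$ writes as $x = y + z$ with $y \in F^{p+1}$ and $z = x - y \in F^p \cap \overline{F^{k-p}} = H^{p,k-p}$, and the directness of $F^{p+1} + H^{p,k-p}$ follows because $F^{p+1} \cap \overline{F^{k-p}} = 0$ by the same splitting. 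Setting $p=0$ finishes the proof.

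The argument is entirely formal and presents no real obstacle; the only care required is the bookkeeping of the indices $k-p$ versus $k-p+1$, where an off-by-one slip would break either the double containment in the first direction or the inductive step in the second.
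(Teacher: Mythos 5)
Your proposal is correct and follows essentially the same route as the paper's proof: the same double-containment argument (intersecting $H^{p,k-p}$ with $H^{p-1,k-p+1}$ for vanishing, and distributing the summands $H^{q,k-q}$ between $F^p$ and $\overline{F^{k-p+1}}$ for the sum) in one direction, and the same splitting $F^p = F^{p+1} \oplus H^{p,k-p}$ iterated downward in the other. No issues.
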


\begin{proof} We write the proof for the convenience of the reader. 

\noindent\textbf{(i)$\Rightarrow$ (ii)} Suppose that the subspaces $\{H^{p, k-p} \}$ define a Hodge structure. 
Then, for all $p$, we obtain 
\[
\{0 \} = H^{p-1, k-p+1} \cap H^{p, k-p} = (F^{p-1} \cap \ol{F^{k-p+1}}) \cap (F^p \cap \ol{F^{k-p}}) =F^p \cap \ol{F^{k-p+1}}. 
\]
Moreover, we have 
\[
F^p \supset \bigoplus_{r \ge p} H^{r, k-r}
\]
by the definition of $H^{r,k-r}$. 
Similarly, we also have 
\[
\ol{F^{k-p+1}} \supset \ol{\bigoplus_{s \ge k-p+1} H^{s,k-s}} = \bigoplus_{s \ge k-p+1} H^{k-s,s} = \bigoplus_{r < p} H^{r,k-r}.  
\]
These show that $F^p + \ol{F^{k-p+1}} = \bigoplus_r H^{r,k-r}= H^k$. 

%
%

\vspace{2mm}

\noindent\textbf{(ii)$\Rightarrow$ (i)}
It is reduced to show the following: 

\begin{claim}\label{claim:decompF^p}
 In the setting (ii), we have 
$
F^p = F^{p+1} \oplus H^{p,k-p} 
$ for all $p$. 
\end{claim}

\begin{proof}[Proof of Claim]
Since we have $F^{p+1} \cap H^{p, k-p} = F^{p+1} \cap \ol{F^{k-p}}=0$, 
it is reduced to show $F^p = F^{p+1} + H^{p,k-p}$. 

Let $x \in F^p$. By $H^k = F^{p+1} \oplus \ol{F^{k-p}}$, 
there exist $y \in F^{p+1}$ and $z \in \ol{F^{k-p}}$ such that $x = y+z$. 
By 
$
z = x-y \in F^p
$, we see that $z \in H^{p, k-p}$ and $x \in F^{p+1} + H^{p,k-p}$. 
This shows $F^p \subset  F^{p+1} + H^{p,k-p}$ and the converse inclusion is clear.  
\end{proof}

By using Claim \ref{claim:decompF^p} repeatedly, we obtain (i) since we have 
\[
H^k = F^0 = F^1 \oplus H^{0,k} = \cdots =H^{k,0} \oplus \cdots \oplus H^{0,k}. 
\]
%
\end{proof}

\begin{rem}\label{rem:putativeHodge}
\begin{enumerate}
\item[(i)] Let $k \in \bZ_{\ge 0}$ and let $X$ be a compact complex manifold. 
For $p \in \bZ$, let 
\[
F^p:=F^p H^k(X, \bC) := \Image (\bH^k(\sigma^{\ge p} \Omega_X^{\bullet}) \to H^k(X, \bC)) 
\]
be the putative Hodge filtration as in \cite[Definition 2.21]{MR2393625}. 
Then $H^k=H^k(X, \bC)$ with $F^{\bullet}$ satisfy the hypothesis of Proposition \ref{prop:pureHodgekopposed}. 

The $\partial \ol{\partial}$-lemma holds for $X$ if and only if the Hodge-de Rham spectral sequence 
\[
E_1^{p,q}=H^q(X, \Omega_X^p)  \Rightarrow H^{p+q}(X, \bC)
\]
degenerates at $E_1$ and, for all $k \in \bZ_{\ge 0}$, the filtration $F^{\bullet} \subset H^k$ is $k$-opposed as in Proposition \ref{prop:pureHodgekopposed}(ii) (cf. \cite[(5.21)]{MR382702}, \cite[Theorem 1.2]{MR4085665}). 
\item[(ii)] The condition $F^p \cap \ol{F^{k-p+1}}=0$ for all $p$ is not equivalent to (i) and (ii) in Proposition \ref{prop:pureHodgekopposed}. For example, let $X$ be a Hopf surface. 
Then $H^1= H^1(X, \bC)$ has the Hodge filtration $\bC \simeq H^1=F^0 \supset F^1 =0$. 
Then $F^1 + \ol{F^1}=0= F^1 \cap \ol{F^1}$, thus we do not have the Hodge decomposition as in Proposition \ref{prop:pureHodgekopposed}. (Note that a sentence in \cite[pp.34, the bottom]{MR2393625} is a bit misleading.)  
\end{enumerate}
\end{rem}

We have the following description of limiting mixed Hodge structures on the central fiber of a semistable degeneration. 

\begin{thm}\label{thm:hodgejunbi}
Let $\phi \colon \cX \rightarrow \Delta$ be a semistable smoothing of a proper SNC variety $X$ over a $1$-dimensional unit disk. 
Let $\Lambda^{\bullet}_{X}:= \Omega^{\bullet}_{\cX/ \Delta}(\log X)|_{X}$. Let $\Delta^*:= \Delta \setminus \{0 \}$, $\cX^*:= \phi^{-1}(\Delta^*)$ and $\phi' \colon \cX^* \to \Delta^*$ be the smooth family induced by $\phi$. 
\begin{enumerate}
\item[(i)] The hypercohomology $H^k:=H^k_{\lim}:= \bH^k( X, \Lambda^{\bullet}_{X})$ is isomorphic to 
$H^k(X_t, \bC)$, where $X_t:= \phi^{-1}(t)$($t \neq 0$) is the general fiber of $\phi$. The sheaf 
$$\cH^k:= \bR^k \phi_* \Omega^{\bullet}_{\cX/ \Delta} (\log X)$$ is locally free and coincides with Deligne's canonical extension of $\cH^k|_{\Delta^*} \simeq R^k \phi'_* \bC_{\cX^*} \otimes \cO_{\Delta^*}$ for the Gauss--Manin connection. 
\item[(ii)] There is a $\bQ$-mixed Hodge structure $(H^k, H^k_{\bQ}, W_{\bullet}^{\lim}, F^{\bullet}_{\lim})$ on $H^k$. 
The spectral sequence 
\[
{}_F E_1^{p,q} = H^q(X, \Lambda_{X}^{p}) \Rightarrow H^{p+q}=\bH^{p+q}(\Lambda_{X}^{\bullet}) 
\]
degenerates at $E_1$ and induces the Hodge filtration $F^{\bullet}_{\lim}$. 
\item[(iii)] 
There exists the monodromy weight spectral sequence 
\[
{}_{W(M)} E_1^{-r, k+r} = \bigoplus_{l \ge \max \{0, -r \}} H^{k-r-2l} \left( X^{(r+2l+1)}, \bQ \right)(-r-l) \Rightarrow H^k_{\bQ}  
\]
which degenerates at $E_2$ and induces the weight filtration on $H^k_{\bQ}$, where $X^{(m)}$ is the $m$-fold intersection locus of $X$ for $m \ge 1$. 
(For $i \in \bZ$, $(i)$ means the Tate twist of degree $i$. )
\item[(iv)] Possibly after shrinking $\Delta$, the spectral sequence 
\[
E_1^{p,q}= R^q \phi_* \Omega^p_{\cX/ \Delta}( \log X) \Rightarrow \bR^{p+q} \phi_* \Omega^{\bullet}_{\cX/ \Delta}(\log X)
\] 
degenerates at $E_1$ and the sheaf $R^q \phi_* \Omega^p_{\cX/ \Delta}( \log X)$ is locally free. 
The spectral sequence 
\[
H^q(X_t, \Omega^p_{X_t}) \Rightarrow H^{p+q}(X_t, \bC)
\]
for $X_t= \phi^{-1}(t)$ degenerates at $E_1$. 
Moreover, the Hodge filtration on $H^k$ induces a filtration  $\cF^{\bullet} \subset \cH^k$ by holomorphic subbundles. 
\end{enumerate}
\end{thm}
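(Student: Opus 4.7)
The plan is to assemble this statement from Steenbrink's foundational work on semistable degenerations \cite{MR0429885}; the proof is not new and consists of recalling and organizing his construction. For (i), I would use Steenbrink's quasi-isomorphism between the nearby cycle complex $R\psi_\phi \bC_{\cX}$ and $\Lambda_X^\bullet$ to get $\bH^k(X,\Lambda_X^\bullet)\cong H^k(X_t,\bC)$. For the Deligne canonical extension claim, I would verify that $\cH^k = R^k \phi_* \Omega_{\cX/\Delta}^\bullet(\log X)$ is coherent (proper base change), that its restriction to $\Delta^*$ is identified via relative de Rham with the local system $R^k\phi'_*\bC_{\cX^*}\otimes \cO_{\Delta^*}$, and that the Gauss--Manin connection has nilpotent residue at $0$ (unipotency of monodromy for semistable families), which characterizes Deligne's canonical extension.

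For (ii) and (iii), the key object is Steenbrink's double complex $A^{\bullet,\bullet}$ quasi-isomorphic to $\Lambda_X^\bullet$, equipped with two filtrations: the Hodge filtration $F^\bullet_{\lim}$ coming from the stupid filtration $\sigma^{\ge p}$, and a weight filtration $W^{\lim}_\bullet$ built from the monodromy operator $N$. The Hodge spectral sequence ${}_F E_1^{p,q}=H^q(X,\Lambda_X^p)$ degenerates at $E_1$ via Steenbrink's argument; combined with a dimension count against the limit fiber one recovers $F^\bullet_{\lim}$. For (iii), I would identify the $E_1$-page of the monodromy weight spectral sequence with the displayed direct sum of cohomologies of the intersection strata $X^{(m)}$ (with the appropriate Tate twists), the differentials being alternating sums of Gysin and restriction maps, and then quote Steenbrink's theorem that it degenerates at $E_2$, which is the nontrivial combinatorial fact produced by his analysis of the compatibility of $N$ with the two filtrations.

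For (iv), after shrinking $\Delta$ if necessary, local freeness of $R^q\phi_*\Omega^p_{\cX/\Delta}(\log X)$ and $E_1$-degeneration in families follow from combining the absolute degeneration statements in (ii), cohomology and base change, and the equality of fiberwise dimensions with those of $H^k(X_t,\bC)$. The subbundle property of $\cF^\bullet \subset \cH^k$ is then a consequence of flatness of the associated graded pieces and Griffiths transversality for the Gauss--Manin connection on the canonical extension. The main obstacle, were one to reprove the theorem from scratch, is part (iii): constructing Steenbrink's double complex and identifying its $E_2$-page is the technical heart of \cite{MR0429885}, and the pragmatic plan is to quote it rather than reproduce it.
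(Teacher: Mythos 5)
Your proposal is correct and follows essentially the same route as the paper, which likewise proves all four parts by quoting the standard Steenbrink machinery (via Peters--Steenbrink, Corollaries 11.18, 11.23, 11.24, Theorem 11.22, and the argument of Proposition 10.29) rather than reconstructing it. The extra detail you supply --- nearby cycles for (i), the double complex $A^{\bullet,\bullet}$ for (ii)--(iii), base change plus Griffiths transversality for (iv) --- is exactly the content of the cited results.
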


\begin{proof}
\noindent(i) This follows from the arguments in \cite[Corollary 11.18]{MR2393625} (cf. \cite[Theorem 2.6]{MR4085665}, \cite[Theorem 2.1(i)]{MR4735955}). 

\noindent(ii) This follows from the arguments in \cite[Theorem 11.22, Corollary 11.23(ii)]{MR2393625}. 

\noindent(iii) follows from 
\cite[Corollary 11.23(i)]{MR2393625}

\noindent(iv) The $E_1$-degeneration on $X_t$ is in \cite[Corollary 11.24]{MR2393625}. Then the $E_1$-degeneration for the sheaves follows from the argument to show 
 \cite[Proposition 10.29]{MR2393625}.     
\end{proof}

\begin{rem}
Let $\phi \colon \cX \to \Delta$ be a semistable smoothing as in Theorem \ref{thm:hodgejunbi} such that $X:= \phi^{-1}(0)$ is an SNC divisor 
with $\omega_{X} \simeq \cO_{X}$. Then $\phi_* \omega_{\cX/ \Delta}$ is locally free 
and the dimension $h^0(\cX_t, \omega_{\cX_t})$ is constant on $\cX_t = \phi^{-1}(t)$ for $t \in \Delta$. 
Hence we see that $\omega_{\cX_t} \simeq \cO_{\cX_t}$.  
\end{rem}

\begin{rem}\label{rem:monodromylog}
Let $\cX \to \Delta$ be a semistable smoothing as in Theorem \ref{thm:hodgejunbi}. 
Then we have a monodromy transformation $T$ of $H^k = H^k(X, \Lambda^{\bullet}_{X})$ 
which is a natural extension of the monodromy transformation on smooth fibers. 
Then $T$ is unipotent (cf. \cite[Corollary 11.19]{MR2393625}) and we let  
\[
N:= \log T = -\sum_{l=1}^{\dim X} \frac{(\id-T)^l}{l} 
\]  
be the monodromy logarithm. 
Then it is known that \[
N':= \frac{N}{2 \pi \sqrt{-1}} \colon H^k_{\bQ} \to H^k_{\bQ}(-1)
\] is a morphism of mixed Hodge structures (cf. \cite[Theorem 11.28]{MR2393625}). 
\end{rem}

\begin{defn}\label{defn:RestGysin} (cf. \cite[(1.2), (1.3)]{MR1068383})
Let $X = \bigcup_{i=1}^m X_i$ be a proper SNC variety with irreducible components $X_1, \ldots , X_m$. 
Let $X^{(1)}:= \coprod_{i=1}^m X_i$ and $X^{(2)}:= \coprod_{i<j} X_{ij}$, where $X_{ij}:= X_i \cap X_j$. 
Let $\delta_1, \delta_2 \colon X^{(2)} \to X^{(1)}$ be the morphism induced by $X_{ij} \hookrightarrow X_j, X_{ij} \hookrightarrow X_i$ respectively. 

 We define the restriction map $\rho_l$ by  
\[
\rho_l:= \delta_1^* - \delta_2^* \colon H^l(X^{(1)}, \bQ) \to H^l (X^{(2)}, \bQ).  
\]
We also define $\gamma_l:= (\delta_1)_! - (\delta_2)_! \colon H^{l-2}(X^{(2)}, \bQ)(-1) \to H^{l}(X^{(1)}, \bQ)$ to be the Gysin map for $\rho_l$ as in \cite[(1.3)]{MR1068383} with the formula
\begin{equation}\label{eq:adjointRhoGamma}
\left( \frac{1}{2\pi \sqrt{-1}} \right)^{n-1} \int_{X^{(2)}} \alpha \cup \rho_{2n-l}(\beta) = \left(\frac{1}{2\pi \sqrt{-1}} \right)^{n} \int_{X^{(1)}} \gamma_l(\alpha) \cup \beta
\end{equation} 
for $n:= \dim X$, $\alpha \in H^{l-2}(X^{(2)})(-1)$ and $\beta \in H^{2n-l}(X^{(1)})$. 

Note that $\rho_l$ and $\gamma_l$ are morphism of Hodge structures, thus their kernels, images and cokernels admit Hodge structures. 
\end{defn}

When the central fiber is an SNC variety without triple intersection, we have the following description. 

\begin{prop}\label{prop:H3CYdesc}
Let $X$ be a proper SNC variety without triple intersection and $\phi \colon \cX \rightarrow \Delta$ be a semistable smoothing of $X$. 
Let $X^{(1)} = \coprod_{i=1}^m X_i$ and $X^{(2)}= \coprod_{i < j} (X_i \cap X_j)$ be as in Definition \ref{defn:RestGysin}. 
Let $H^k:= \bH^k(X, \Lambda^{\bullet}_{X})$ be as in Theorem \ref{thm:hodgejunbi}(i) and $H^k_{\bQ} \subset H^k$ be the $\bQ$-structure with the weight filtration $W_{\bullet}$. Then we have the following.  
\begin{enumerate}
\item[(i)] The term $E_1^{-r, k+r}$ in the weight spectral sequence is non-zero only if $r=-1,0,1$ and we have 
\[
E_1^{1, k-1} = H^{k-1}(X^{(2)}, \bQ), \ \ E_1^{0,k} = H^k(X^{(1)}, \bQ), \ \ E_1^{-1, k+1} = H^{k-1}(X^{(2)}, \bQ)(-1).   
\]
\item[(ii)] As a consequence of (i), we have $W_{k-2} =0$, $W_{k+1} = H^k_{\bQ}$ and 
\[
W_{k-1} \simeq \Coker (E_1^{0,k-1} \to E_1^{1, k-1}) = \Coker \rho_{k-1}, 
\]
\[
W_k/W_{k-1} \simeq \Ker \rho_k / \Image \gamma_k, 
\]
\[
W_{k+1}/ W_k \simeq \Ker \gamma_{k+1}, 
\]
where $\rho_l \colon H^l(X^{(1)}, \bQ) \to H^l(X^{(2)}, \bQ)$ is the restriction map and 
$\gamma_l \colon H^{l-2}(X^{(2)}, \bQ)(-1) \to H^l(X^{(1)}, \bQ)$ is the Gysin map as in Definition \ref{defn:RestGysin}.  

\item[(iii)] Assume that $k=n:=\dim X$.   
 Then the monodromy nilpotent operator $N' \colon H^n_{\bQ} \to H^n_{\bQ}(-1)$ as in Remark \ref{rem:monodromylog} induces an isomorphism 
$$
N' \colon W_{n+1}/W_n \xrightarrow{\simeq} W_{n-1}(-1) 
$$ 
if and only if the cup product is non-degenerate on $\Image \rho_{n-1} \subset H^{n-1}(X^{(2)}, \bC)$. 
\end{enumerate} 
\end{prop}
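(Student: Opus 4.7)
The plan is to handle (i), (ii), (iii) in order, with (iii) being the substantive step.

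For (i), I would simply use that $X^{(m)} = \emptyset$ for $m \ge 3$ when $X$ has no triple intersection. Substituting into the formula for $E_1^{-r, k+r}$ shows that only summands with $r + 2l + 1 \in \{1, 2\}$ survive; combined with $l \ge \max\{0, -r\}$, a short case check singles out the pairs $(r, l) \in \{(-1, 1), (0, 0), (1, 0)\}$, yielding the three groups displayed in the statement.

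For (ii), I would appeal to Steenbrink's description of the $d_1$-differential on the monodromy weight spectral sequence as a signed sum of the Gysin map $\gamma$ and the restriction map $\rho$ (cf.\ \cite{MR2393625}). With only three non-vanishing $E_1$-terms at total degree $k$, the terms on the neighbouring total degrees that contribute to incoming/outgoing differentials are read off analogously from (i), and one obtains
\[
E_2^{-1, k+1} = \Ker \gamma_{k+1}, \quad E_2^{0, k} = \Ker \rho_k / \Image \gamma_k, \quad E_2^{1, k-1} = \Coker \rho_{k-1}.
\]
The vanishing of $E_1^{-r, k+r}$ for $|r| \ge 2$ also forces $W_{k-2} = 0$ and $W_{k+1} = H^k_{\bQ}$; the $E_2$-degeneration in Theorem \ref{thm:hodgejunbi}(iii) identifies the $E_2$-terms with the graded pieces of $W_\bullet$ as claimed.

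For (iii), specialise to $k = n$. The plan is first to recall that at the $E_1$-level the monodromy $N'$ sends the $l$-th summand in $E_1^{-r, k+r}$ to the corresponding summand in $E_1^{-r+2, k+r-2}(-1)$ essentially by the identity (after appropriate Tate twist). For $r = 1, l = 0$ this gives the identity between the two copies of $H^{n-1}(X^{(2)})(-1)$ appearing in $E_1^{-1, n+1}$ and in $E_1^{1, n-1}(-1)$. Since $N'$ commutes with the $d_1$'s, passing to $E_\infty = E_2$ identifies the induced map $N' \colon W_{n+1}/W_n \to W_{n-1}(-1)$ with the natural composition
\[
\Ker \gamma_{n+1} \hookrightarrow H^{n-1}(X^{(2)})(-1) \twoheadrightarrow \Coker \rho_{n-1}(-1).
\]

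To finish, I would apply the adjointness relation \eqref{eq:adjointRhoGamma} with $l = n+1$: it identifies $\Ker \gamma_{n+1}$ with the Poincar\'{e}-orthogonal $(\Image \rho_{n-1})^{\perp}$ in $H^{n-1}(X^{(2)})$. In particular one gets the dimension equality $\dim \Ker \gamma_{n+1} = \dim \Coker \rho_{n-1}$, so the composition above is an isomorphism iff it is injective, iff $\Ker \gamma_{n+1} \cap \Image \rho_{n-1} = 0$. This intersection is precisely the radical of the cup product restricted to $\Image \rho_{n-1}$, and thus vanishes iff that pairing is non-degenerate. The main obstacle I expect is the bookkeeping for the $d_1$-differentials and the $N$-action on the $E_1$-page of the Steenbrink spectral sequence (choice of signs, matching of summands across shifted bidegrees); once these identifications are pinned down, the remainder is elementary linear algebra combined with Poincar\'e duality on $X^{(2)}$.
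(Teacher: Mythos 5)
Your proposal is correct and follows essentially the same route as the paper: (i) and (ii) by reading off the surviving terms of the Steenbrink weight spectral sequence, and (iii) by identifying the induced map $W_{n+1}/W_n \to W_{n-1}(-1)$ with the composition $\Ker\gamma_{n+1} \hookrightarrow H^{n-1}(X^{(2)})(-1) \twoheadrightarrow \Coker\rho_{n-1}(-1)$ and using the adjunction \eqref{eq:adjointRhoGamma} to identify $\Ker\gamma_{n+1}$ with $(\Image\rho_{n-1})^{\perp}$. Your dimension-count-plus-radical formulation of the final linear-algebra step is equivalent to the paper's observation that $W^{\perp}\hookrightarrow V \to V/W$ is an isomorphism precisely when the form restricted to $W$ is non-degenerate.
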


\begin{proof}
(i) follows from Theorem \ref{thm:hodgejunbi}(iii) since we only need to care $r+2l+1 =1,2$. 

\noindent(ii) is a consequence of (i) and the weight spectral sequence. 

\noindent(iii) By (ii), we see that the homomorphism $N' \colon W_{n+1}/W_n \rightarrow W_{n-1}(-1)$ can be regarded as a homomorphism 
\[
N' \colon \Ker \gamma_{n+1} \to \Coker \rho_{n-1} (-1)
\] 
induced by the identity homomorphism on $H^{n-1}(X^{(2)}, \bQ)(-1)$. 
Let 
\[
(\Image \rho_{n-1})^{\perp} \subset H^{n-1}(X^{(2)}, \bQ)
\]
 be the orthogonal complement of $\Image \rho_{n-1}$ for the cup product.  
 Then we see that $$\Ker \gamma_{n+1} = (\Image \rho_{n-1})^{\perp}(-1)$$ by the adjoint property 
 of $\rho_{n-1}$ and $\gamma_{n-1}$ as in (\ref{eq:adjointRhoGamma}) and the non-degeneracy of the cup product on $H^{n-1}(X^{(2)})$. 
 
Assume that the cup product is non-degenerate on $\Image \rho_{n-1}$.  
Then the natural homomorphism 
 \[
\pi' \colon (\Image \rho_{n-1})^{\perp} \to \Coker \rho_{n-1}
 \]
 is an isomorphism. 
 (If $(V, \langle , \rangle)$ is a finite dimensional vector space with a non-degenerate bilinear form 
 and $W \subset V$ is a subspace such that $\langle , \rangle|_W$ is non-degenerate, then the composition $W^\perp \hookrightarrow V \to V/W$  is an isomorphism.) 
 Hence $N'$ is an isomorphism. 
 
 On the other hand, if the cup product is degenerate on $\Image \rho_{n-1}$, then the homomorphism $\pi'$ is not injective. 
 Hence we obtain the converse. 
\end{proof}

\begin{rem}\label{rem:LefschetzCompati}
Let $X$ be a smooth projective variety of pure dimension $n$ with an ample class $L \in H^2(X, \bZ)$. 
Then, for $\alpha, \beta \in H^n(X, \bC)$, we have the non-degenerate pairing
\[
\langle \alpha, \beta \rangle:= \int_X \alpha \wedge \beta.  
\]
Let $W \subset H^n(X, \bC)$ be a sub-Hodge structure which is ``compatible with the Lefschetz decomposition'', that is, 
\[
W= \bigoplus_{r\ge0} \left( L^r H^{n-2r}_{\prim}(X) \cap W \right),     
\] 
where $H^{n-2r}_{\prim}(X):= \Ker (L^{2r+1} \colon H^{n-2r}(X, \bC) \to H^{n+2r+2}(X, \bC))$ is the primitive cohomology for $L$. 
Then we see that $\langle , \rangle|_W$ is also non-degenerate. 

\begin{proof} 

Note that we have the Hodge decomposition 
\[
L^r H^{n-2r}_{\prim}(X) \cap W = \bigoplus_{p=r}^{n-r} \left( L^r H^{n-2r}_{\prim}(X) \cap W^{p,n-p} \right)
\] 
and the decomposition is orthogonal for the pairing. 
We have the polarization on each summand $L^r H^{n-2r}_{\prim}(X) \cap W^{p,n-p}$ defined by 
\[
Q(L^r(\gamma), L^r(\delta)) = (-1)^{\frac{(n-2r)(n-2r-1)}{2}} \sqrt{-1}^{2p-n} \int_X  \gamma \wedge \ol{\delta} \wedge L^{2r}. 
\]
This implies the non-degeneracy of the pairing on $W$. 
\end{proof}

By the above, if $\Image \rho_{n-1}$ is compatible with the Lefschetz decomposition, 
then the cup product is non-degenerate on $\Image \rho_{n-1}$. 
\end{rem}

We shall use the Deligne's splitting for the mixed Hodge structure. 

\begin{defnprop}\label{defnprop:MHS}
Let $(H, W_{\bullet}, F^{\bullet})$ be a mixed Hodge structure. Let 
\[
I^{p,q}:= F^p \cap W_{p+q} \cap (\ol{F^q}\cap W_{p+q} + \sum_{j \ge 2} \ol{F^{q-j+1}} \cap W_{p+q-j} ). 
\]
Then we have 
\[
W_k = \bigoplus_{p+q \le k} I^{p,q}, \ \ F^p = \bigoplus_{r \ge p} I^{r,q}. 
\]
\end{defnprop}

For a smoothing as in Proposition \ref{prop:H3CYdesc}, we have the following information on $I^{p,q}$. 
	
\begin{prop}\label{prop:limitMHSIpq}
Let $X$ be a proper SNC variety and $\phi \colon \cX \rightarrow \Delta$ as in Proposition \ref{prop:H3CYdesc}. 
Let $(H^k, W_{\bullet}, F^{\bullet})$ be the limiting mixed Hodge structure on $H^k$ as in Theorem \ref{thm:hodgejunbi}(ii). 
Then  the following holds. 
\begin{enumerate}
\item[(i)] We have 
\[
W_{k-1} = I^{k-1,0} \oplus I^{k-2,1} \oplus \cdots \oplus I^{0,k-1}, \ \ 
\ol{I^{i,k-1-i}} = I^{k-1-i, i} \ (i=0, \ldots k-1), 
\]
\[
W_k=W_{k-1} \oplus I^{k,0} \oplus I^{k-1, 1} \oplus \cdots \oplus I^{0,k}, \ \ \ol{I^{i,k-i}} = I^{k-i,i} \ (i=0, \ldots ,k), 
\]
\[
W_{k+1}= W_k \oplus I^{k,1}  \oplus \cdots \oplus I^{1, k},  
\]
\item[(ii)] 
Moreover, for all $p$, we have 
\begin{equation}\label{eq:F^pI^pqdecomp}
F^p = \left( I^{p, k-1-p} \oplus \cdots \oplus I^{k-1, 0} \right) \oplus \left( I^{p, k-p} \oplus \cdots  \oplus I^{k, 0} \right) \oplus 
\left( I^{p, k+1-p} \oplus \cdots \oplus I^{k, 1} \right). 
\end{equation}
\item[(iii)] Let $\pi \colon W_{k+1} \to W_{k+1}/W_k$ be the projection and $V^{i, k+1-i}:= \pi (I^{i, k+1-i})$ for $i=1, \ldots , k$. 
Then we have the decomposition $$W_{k+1}/W_k = \bigoplus_{i=1}^k V^{i, k+1-i}$$ 
such that $\ol{V^{i, k+1-i}} = V^{k+1-i,i}$. 
\end{enumerate}
\end{prop}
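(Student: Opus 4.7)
The plan is to apply Deligne's splitting (Definition-Proposition \ref{defnprop:MHS}) in the special situation of Proposition \ref{prop:H3CYdesc}, where the no-triple-intersection hypothesis forces the weight filtration on $H^k$ to be concentrated in three consecutive weights: $W_{k-2} = 0$ and $W_{k+1} = H^k$. Consequently $I^{p,q} = 0$ unless $p+q \in \{k-1, k, k+1\}$, so each $W_m$ decomposes only into pieces of exact weight $m$, and the defining formula for $I^{p,q}$ simplifies considerably when $p+q \le k$.

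For (i), the decomposition $W_{k-1} = \bigoplus_{p+q = k-1} I^{p,q}$, and analogously for $W_k$ and $W_{k+1} = H^k$, is immediate from $W_m = \bigoplus_{p+q \le m} I^{p,q}$ together with the vanishing of $I^{p,q}$ for $p+q < k-1$. For the conjugation identities at the two lowest levels I would exploit the shortness of the filtration: when $p+q \le k$, every term $W_{p+q-j}$ with $j \ge 2$ in the defining formula vanishes, so
\[
I^{p,k-1-p} = F^p \cap \ol{F^{k-1-p}} \cap W_{k-1}, \qquad I^{p,k-p} = F^p \cap \ol{F^{k-p}} \cap W_k.
\]
Since $W_{\bullet}$ is defined over $\bQ$ and hence self-conjugate, taking complex conjugates of these formulas yields $\ol{I^{i,k-1-i}} = I^{k-1-i,i}$ and $\ol{I^{i,k-i}} = I^{k-i,i}$ respectively.

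For (ii), I would apply the Deligne formula $F^p = \bigoplus_{r \ge p} I^{r,s}$ and partition the index set by the weight $r+s$, which can only be $k-1$, $k$, or $k+1$; this directly produces the three groups of summands in (\ref{eq:F^pI^pqdecomp}). For (iii), the pure graded quotient $\Gr^W_{k+1} H^k = W_{k+1}/W_k$ inherits a pure Hodge structure of weight $k+1$, and the Deligne splitting identifies its $(i, k+1-i)$-Hodge piece with $V^{i, k+1-i} = \pi(I^{i, k+1-i})$. Hodge symmetry on this pure graded piece then yields $\ol{V^{i,k+1-i}} = V^{k+1-i,i}$.

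The main technical point is the exact (not just approximate) conjugation identity in (i), because Deligne's general splitting only guarantees $\ol{I^{p,q}} \equiv I^{q,p}$ modulo lower weight pieces $\bigoplus_{r+s < p+q} I^{r,s}$. What rescues us is precisely the absence of such lower weight pieces in our range, a direct consequence of the no-triple-intersection hypothesis through Proposition \ref{prop:H3CYdesc}(i); at the top weight $k+1$ this absence breaks down, which is exactly why (iii) is formulated in the quotient $W_{k+1}/W_k$ rather than on $I^{p,q}$ directly.
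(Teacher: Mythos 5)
Your overall strategy is the same as the paper's: specialize Deligne's splitting (Definition-Proposition \ref{defnprop:MHS}) using $W_{k-2}=0$, $W_{k+1}=H^k$ from Proposition \ref{prop:H3CYdesc}, observe that for $p+q\le k$ the correction terms $W_{p+q-j}$ ($j\ge 2$) vanish so that $I^{p,q}=F^p\cap\ol{F^{q}}\cap W_{p+q}$ is manifestly conjugation-symmetric, and read off (ii) and (iii) from $F^p=\bigoplus_{r\ge p}I^{r,s}$ and the purity of $\Gr^W_{k+1}$. Parts (i) (lower two weights) and the general shape of (ii) are handled correctly and exactly as in the paper.

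There is, however, one step you pass over that the paper treats explicitly: the vanishing $I^{k+1,0}=I^{0,k+1}=0$. The general Deligne splitting of $W_{k+1}=H^k$ gives $W_{k+1}=W_k\oplus\bigoplus_{p+q=k+1}I^{p,q}$, which a priori includes the extreme pieces $I^{k+1,0}$ and $I^{0,k+1}$; the statement of (i) omits them, the third block of (\ref{eq:F^pI^pqdecomp}) stops at $I^{k,1}$ rather than $I^{k+1,0}$, and the index range $i=1,\ldots,k$ in (iii) likewise presupposes that the $(k+1,0)$ and $(0,k+1)$ Hodge pieces of the weight-$(k+1)$ graded quotient vanish. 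Purity of $\Gr^W_{k+1}H^k$ alone does not give this. The paper's proof supplies the missing input: by Proposition \ref{prop:H3CYdesc}(ii), $W_{k+1}/W_k\simeq\Ker\gamma_{k+1}$ is a sub-Hodge structure of $H^{k-1}(X^{(2)},\bQ)(-1)$, whose Hodge types $(p+1,q+1)$ with $p,q\ge 0$ all satisfy $p+1,q+1\ge 1$, so the $(k+1,0)$ and $(0,k+1)$ parts are zero. (Alternatively one can argue directly that $F^{k+1}H^k=0$ kills $I^{k+1,0}$, and that $I^{0,k+1}=\ol{F^{k}}\cap W_{k-1}=0$ because $\Gr^W_{k-1}$ has no Hodge types outside $0\le p\le k-1$.) You should add one of these arguments; with it, your proof is complete and coincides with the paper's.
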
	

\begin{proof}
\noindent(i),(ii): The decompositions of $W_{k-1}$, $W_k$ and $F^p$ follow from Definition-Proposition \ref{defnprop:MHS} and Proposition \ref{prop:H3CYdesc}(ii).  
We see that 
\[
I^{i, k-1-i} = F^i \cap \ol{F^{k-1-i}} \cap W_{k-1}, \ \  
I^{i, k-i} = F^i \cap \ol{F^{k-i}} \cap W_{k}
\]
by $W_{k-1-j} =0$ and $W_{k-j} = 0$ for $j \ge 2$. 
 Hence we obtain $\ol{I^{i,k-1-i}} = I^{k-1-i, i}$ and $\ol{I^{i,k-i}} = I^{k-i,i}$. 
Since $W_{k+1}/W_k \simeq \Ker \gamma_k$ is a sub-Hodge structure of $H^{k-1}(X^{(2)}, \bQ)(-1)$ by Proposition \ref{prop:H3CYdesc} (ii), 
we see that $I^{k+1, 0} = I^{0, k+1} =0$. 

\noindent(iii) follows since $W_{\bullet}$ induces a mixed Hodge structure. 
\end{proof}

\section{Proof of Theorems}

\subsection{$\partial\bar{\partial}$-lemmas}

By using the above ingredients, we prove Theorem \ref{thm:main}. 

\begin{proof}[Proof of Theorem \ref{thm:main}] 
By 
the hypothesis, 
the monodromy logarithm induces an isomorphism 
\[
N = N_k^{[1]} \colon W_{k+1}/W_k \simeq W_{k-1}. 
\]

By \cite[Theorem 1.2]{MR4085665} and Theorem \ref{thm:hodgejunbi}(iv), it is enough to show that the Hodge filtration $F_t^{\bullet}$ on $H^k(X_t, \bC)$ satisfies 
\begin{equation}\label{eq:k-opposed}
F^p_t \oplus \ol{F^{k-p+1}_t} = H^k(X_t, \bC) 
\end{equation}
for all $p$ on the smooth fiber $X_t = \phi^{-1}(t)$ of the semistable smoothing $\phi \colon \cX \to \Delta$. 

First, let us choose a $\bC$-basis of $F^p_0=F^p_{\lim} \subset H^k$ of the limit Hodge filtration. 
Let $h^{i,j}:= \dim I^{i,j}$. 
We may take a basis $\{ u^{i,j}_l \in I^{i,j} \mid l=1, \ldots ,h^{i,j}\}$ of $I^{i,j}$ for $i+j=k-1,k, k+1$ so that 
\[
\ol{u^{i, k-1-i}_l} = u^{k-1-i, i}_l \ \ (i=0,\ldots, k-1), 
\]  
\[
\ol{u^{i, k-i}_l} = u^{k-i, i}_l \ \ (i=0,\ldots, k), 
\]  
\[
N(u^{i,k+1-i}_l) = u^{i-1, k-i}_l \ \ (i=1,\ldots, k, l= 1,\ldots, h^{i,k+1-i}) 
\]
by Proposition \ref{prop:limitMHSIpq}(i) and the induced isomorphism $N \colon I^{i, k+1-i} \xrightarrow{\sim} I^{i-1, k-i}$. 

We may also assume that 
\begin{equation}\label{eq:olu^{p,k+1-p}w_l}
\ol{u^{i, k+1-i}_l} = u^{k+1-i, i}_l + w_l
\end{equation} for some $w_l = w_{l,i} \in W_k$ 
since $\pi(\ol{I^{i, k+1-i}}) = \pi( I^{k+1-i, i})$ for $\pi \colon W_{k+1} \to W_{k+1}/W_k$ as in Proposition \ref{prop:limitMHSIpq}(iii). 


\begin{rem}(cf. \cite{MR2393625}, \cite{Schnell:2007aa})
For any basis $v_1, \ldots ,v_r \in H^k = \bH^k(X, \Lambda_{X}^{\bullet})$, we have a local holomorphic frame $v_1(\zeta), \ldots , v_r(\zeta) \in \cH^k = \bR^k \phi_* \Omega^{\bullet}_{\cX/ \Delta} (\log X)$ 
with the variable $\zeta \in \Delta$ since $\cH^k$ is a trivial bundle. 
Let $\tau \colon \mathfrak{h} \to \Delta^*:= \Delta\setminus \{0 \}$ be the universal cover defined by $\tau(z):= e^{2\pi \sqrt{-1}z}$. 
These $v_i(\zeta)$   
correspond to the $H^k$-valued functions 
 $v_i(z) \colon \mathfrak{h} \rightarrow H^k$ for $i=1, \ldots , r$ defined by $v_i(z):= e^{zN} \cdot v_i$ with the invariance property 
 $v_i(z+1) = T v_i(z)$ for the monodromy transformation $T$ of $H^k$.  
The functions $v_1(z), \ldots , v_r(z)$ form a local holomorphic frame of $\tau^* \cH^k|_{\Delta^*}$ with the invariance property. 
 
 Since $\cF^p \subset \cH^k$ is a holomorphic subbundle, we can extend $u \in F^p \subset H^k$ to a holomorphic section $U(\zeta) \in \cF^p$ over $\Delta$  which corresponds to the function $U(z) \colon \fH \to H^k$ with the property $U(z+1) = T \cdot U(z)$. 
 Note that $U(\zeta)$ can be written as a $\cO_{\Delta}$-linear combination of $v_1(\zeta), \ldots , v_r(\zeta)$. 
\end{rem}

Note that the decomposition (\ref{eq:F^pI^pqdecomp}) implies 
\begin{multline*}
\ol{F^{k+1-p}} \\ 
= \ol{\left( I^{k+1-p, p-2} \oplus \cdots \oplus I^{k-1, 0} \right)} \oplus \ol{\left( I^{k+1-p, p-1} \oplus \cdots  \oplus I^{k, 0} \right)} \oplus 
\ol{\left( I^{k+1-p, p} \oplus \cdots \oplus I^{k, 1} \right)} \\
= \left( I^{ p-2, k+1-p} \oplus \cdots \oplus I^{ 0, k-1} \right) \oplus \left( I^{ p-1, k+1-p} \oplus \cdots  \oplus I^{ 0, k} \right) \oplus 
\left( \ol{I^{k+1-p, p}} \oplus \cdots \oplus \ol{I^{k, 1}} \right).
\end{multline*}

Let $U^{i,j}_l(z) \colon \fH \to H^k$ be the function corresponding to the section of $\cF^p$ which is an extension 
of $u^{i,j}_l \in F^p$ and let $u^{i,j}_l(z):= e^{zN} \cdot u^{i,j}_l \colon \fH \to H^k$ be the function determined by $u^{i,j}_l \in H^k$. Note that the corresponding sections $\{u^{i,j}_l(\zeta) \in \cH^k \}$ forms a local holomorphic frame of $\cH^k$ and we may write 
\begin{equation}\label{eq:UandAdescribe}
U^{i,j}_l(z) = \sum A^{i',j'}_{l'}(\zeta) \cdot u^{i',j'}_{l'}(z)
\end{equation}
for some holomorphic functions $A^{i',j'}_{l'}(\zeta) =A^{i',j'}_{l'}[l,i,j] \in \cO_{\Delta}$ (which depend on $l,i,j$). 
Since we have 
\[
u^{i,j}_l(z) = \begin{cases}
u^{i,j}_l & (i+j \le k) \\
u^{i,j}_l + z u^{i-1, j-1}_l & (i+j = k+1), 
\end{cases}
\]
we have 
\[
U^{i,j}_l(z) = \sum (A^{i',k-1-i'}_{l'}+ z A^{i'+1, k-i'}_{l'})\cdot u^{i',k-1-i'}_{l'} 
+ \sum A^{i',k-i'}_{l'} \cdot u^{i',k-i'}_{l'} + 
\sum A_{l'}^{i',k+1-i'} \cdot u^{i',k+1-i'}_{l'}. 
\]
Let $z = x+\sqrt{-1}y$. 
By the above description,  we see that 
\begin{equation}\label{eq:U^{i,k-1-i}approx}
U^{i, k-1-i}_l(z) = u_l^{i,k-1-i}+ O\left(y^{-1} \right).  
\end{equation}
Indeed, since $A^{i,j}_l$ is holomorphic, we have $A^{i,k-1-i}_l(\zeta) = 1+ O\left(y^{-1} \right)$, $A^{i',j'}_{l'}(\zeta) = O\left(y^{-1} \right)$ for $(i', j', l') \neq (i,k-1-i,l)$ and 
$\dps{|z \zeta| = \left| \frac{y}{e^y} \right|}$. Similarly, we have 
\[
U_l^{i,k-i}(z) = u^{i,k-i}_l+ O\left(y^{-1} \right), \ \ \ \ 
U_l^{i, k+1-i}(z) = u_l^{i, k+1-i}+z u^{i-1, k-i}_l + O\left(y^{-1} \right). 
\]
We also see that 
\[
\ol{U_l^{i, k-1-i}(z)} =u_l^{k-1-i,i}+ O\left(y^{-1} \right), \ \  
\ol{U_l^{i, k-i}(z)} = u_l^{k-i,i}+ O\left(y^{-1} \right), 
\]
\[
\ol{U_l^{i,k+1-i}(z)} = \ol{u_l^{i, k+1-i}} + \ol{z} \cdot u_l^{k-i, i-1} + O\left(y^{-1} \right). 
\]

In order to show (\ref{eq:k-opposed}), it is enough to check 
\begin{multline*}
\Phi(z):= \left( \bigwedge_{i=p}^{k-1} \bigwedge_{l=1}^{h^{i,k-1-i}} U_l^{i,k-1-i}(z) \right) \wedge \left( \bigwedge_{i=p}^{k} \bigwedge_{l=1}^{h^{i,k-i}} U_l^{i,k-i}(z) \right) \wedge 
 \left( \bigwedge_{i=p}^{k} \bigwedge_{l=1}^{h^{i,k+1-i}} U_l^{i,k+1-i}(z) \right) \\
 \wedge  \left( \bigwedge_{i=k+1-p}^{k-1} \bigwedge_{l=1}^{h^{i,k-1-i}} \ol{U_l^{i,k-1-i}(z)} \right) 
 \wedge  \left( \bigwedge_{i=k+1-p}^{k} \bigwedge_{l=1}^{h^{i,k-i}} \ol{U_l^{i,k-i}(z)} \right) 
  \wedge  \left( \bigwedge_{i=k+1-p}^{k} \bigwedge_{l=1}^{h^{i,k+1-i}} \ol{U_l^{i,k+1-i}(z)} \right) 
  \\ \neq 0  \in \bigwedge^{b_k} H^k
\end{multline*}
for $y=\Image z \gg 0$, where $b_k:= \dim_{\bC} H^k$.

By the above approximations as (\ref{eq:U^{i,k-1-i}approx}), we compute that 

\begin{multline}\label{eq:phi_z}
\Phi(z)  = 
 \bigwedge_{i=p}^{k-1}  \bigwedge_l \left( u_l^{i,k-1-i}+ O\left(y^{-1} \right) \right) \wedge \bigwedge_{i=p}^k \bigwedge_l \left( u_l^{i,k-i} + O\left(y^{-1} \right)\right) 
 \\ \wedge \bigwedge_{i=p}^k \bigwedge_l \left( u^{i,k+1-i}_l+ z \cdot u^{i-1, k-i}_l + O\left(y^{-1} \right) \right) \\ 
\wedge \bigwedge_{i=k+1-p}^{k-1} \bigwedge_l \left( u_l^{k-1-i,i} + O\left(y^{-1} \right) \right) \wedge 
\bigwedge_{i=k+1-p}^k \bigwedge_l \left( u_l^{k-i,i} + O\left(y^{-1} \right) \right) \\ 
\wedge \bigwedge_{i=k+1-p}^k \bigwedge_l 
\left( \ol{u_l^{i,k+1-i}} + \ol{z} \cdot u_l^{k-i, i-1} + O\left(y^{-1} \right) \right) \\
= \pm \left(\bigwedge_{i=0, i \neq p-1}^{k-1}  \bigwedge_l  u_l^{i,k-1-i} \right) \wedge \left( \bigwedge_{i=0}^k \bigwedge_l u^{i,k-i}_l \right) \wedge \bigwedge_{i=p+1}^k \bigwedge_l u_l^{i, k+1-i} \wedge \bigwedge_{i=k+2-p}^k \bigwedge_l \ol{u^{i, k+1-i}_l} 
\\ 
\wedge \bigwedge_l \left( u_l^{p, k+1-p} + z \cdot u_l^{p-1,k-p} \right) \wedge \left( \ol{u_l^{k+1-p,p}} + \ol{z} \cdot u^{p-1, k-p}_l \right) + O\left(y^{-1} \right) \\ 
= \pm \left(\bigwedge_{i=0, i \neq p-1}^{k-1}  \bigwedge_l  u_l^{i,k-1-i} \right) \wedge \left( \bigwedge_{i=0}^k \bigwedge_l u^{i,k-i}_l \right) \wedge \bigwedge_{i=p+1}^k \bigwedge_l u_l^{i, k+1-i} \wedge \bigwedge_{i=k+2-p}^k \bigwedge_l (u_l^{k+1-i,i} + w_{l,i})  
\\ 
\wedge \bigwedge_l \left( u_l^{p, k+1-p} + z \cdot u_l^{p-1,k-p} \right) \wedge \left( \ol{u_l^{k+1-p,p}} + \ol{z} \cdot u^{p-1, k-p}_l \right) + O\left(y^{-1} \right). 
\end{multline}
Note that we can ignore the term $w_{l,i}$ in the last line since the wedge products from these do not contain elements from $I^{i, k+1-i}$ for $i=1, \ldots, p-1$ and vanish.   

By (\ref{eq:olu^{p,k+1-p}w_l}), we have 
\begin{multline*}
\left( u_l^{p, k+1-p} + z \cdot u_l^{p-1,k-p} \right) \wedge \left( \ol{u_l^{k+1-p,p}} + \ol{z} \cdot u^{p-1, k-p}_l \right) \\ 
= \left( u_l^{p, k+1-p} + z \cdot u_l^{p-1,k-p} \right) \wedge \left( u_l^{p, k+1-p} +w_l + \ol{z} \cdot u^{p-1, k-p}_l \right) \\ 
= (z- \ol{z})u^{p-1, k-p}_l \wedge u_l^{p, k+1-p} + (u_l^{p, k+1-p} + z u_l^{p-1, k-p}) \wedge w_l \\
 = (z-\ol{z}) \left(u^{p-1, k-p}_l \wedge u_l^{p, k+1-p} + O(y^{-1}) \right) + zu_l^{p-1, k-p} \wedge w_{l}   
\end{multline*} 
and we can also ignore the term with $w_l= w_{l, k+1-p}$ by the same reason as before (the wedge product misses an element of $I^{p, k+1-p}$). 
By this and (\ref{eq:phi_z}), we finally obtain 
\begin{multline*}
\Phi(z) = \pm (z-\ol{z})^{h^{p-1,k-p}} \left( 1+ O\left(y^{-1} \right) \right) \left(\bigwedge_{i=0, i \neq p-1}^{k-1}  \bigwedge_l  u_l^{i,k-1-i} \right) \wedge \left( \bigwedge_{i=0}^k \bigwedge_l u^{i,k-i}_l \right) \\
 \wedge \bigwedge_{i=p+1}^k \bigwedge_l u_l^{i, k+1-i} \wedge \bigwedge_{i=k+2-p}^k \bigwedge_l u^{k+1-i, i}_l  
\wedge  \bigwedge_l \left(u_l^{p-1, k-p} \wedge u_l^{p, k+1-p} \right) +O\left(y^{-1} \right) \\
= \pm (z-\ol{z})^{h^{p-1,k-p}} \left( 1+ O\left(y^{-1} \right) \right) \left(\bigwedge_{i=0}^{k-1}  \bigwedge_l  u_l^{i,k-1-i} \right) \wedge \left( \bigwedge_{i=0}^k \bigwedge_l u^{i,k-i}_l \right)  \\ 
 \wedge \left( \bigwedge_{i=1}^k \bigwedge_l u_l^{i, k+1-i}  \right) +O \left(y^{-1} \right), 
\end{multline*}
thus this is nonzero when $y= \Image z \gg 0$.

\end{proof}

\begin{proof}[Proof of Theorem \ref{thm:intro2}]
We see that $N_n^{[1]}$ is an isomorphism by Proposition \ref{prop:H3CYdesc}. 
Then the latter statement follows from Theorem \ref{thm:main}. 
\end{proof}

Let us show Corollary \ref{cor:Tyurincase}.  

\begin{proof}[Proof of Corollary \ref{cor:Tyurincase}]
\noindent(i) By Remark \ref{rem:LefschetzCompati}, it is enough to check that $\Image \rho_2 \subset H^2(D, \bC)$ is compatible with the Lefschetz decomposition for some 
ample class $L$ on $D$. 

Let $L_1$ be an ample class on $X_1$ and $L:= L_1|_D$ be its restriction to $D$. 
Then we see that $L \in  \Image \rho_2$ and can check that $\Image \rho_2 = \bC \cdot L \oplus (L^{\perp} \cap \Image \rho_2)$. 
Hence $\Image \rho_2$ is compatible with the Lefschetz decomposition and obtain the claim. 

\noindent(ii) By (i), we see that $H^3$ of the examples in \cite{MR4584262} and \cite{Sano:2022aa} admits a pure Hodge structure 
since they are constructed as a semistable smoothing of an SNC variety as in the hypothesis. 
Then the $\partial\ol{\partial}$-lemma holds by \cite[Corollary 1.6]{MR4085665} since we have $H^i(X, \cO_X) = 0 = H^0(X, \Omega_X^i)$ for $i=1,2$ 
on the non-K\"{a}hler Calabi--Yau 3-folds $X$ as in \cite{MR4584262} and \cite{Sano:2022aa}. 
\end{proof}

\begin{rem}
Let $ \cX \to \Delta$ be a semistable smoothing of a proper SNC variety $X$ of dimension $n$.  
When $n=2$, then the Hodge filtration induces a pure Hodge structure on $H^2(\cX_t)$ by Theorem \ref{thm:hodgejunbi} (iv) and \cite[Lemma 1.5]{MR4085665}.  

Hence the problem of the Hodge symmetry on $H^n(X_t, \bC)$ makes sense when $n \ge 3$.  
It would be interesting to find an example of a semistable smoothing of a proper SNC variety on which the Hodge symmetry does not hold on the middle cohomology $H^n$. 

\end{rem}

\subsection{Positive definiteness of bilinear forms}

Before we prove Theorem \ref{thm:Hodge-RiemannH^3}, we explain the condition (*) in Theorem \ref{thm:Hodge-RiemannH^3}.

\begin{rem}\label{rem:condition*explain}
Let us explain the condition (*) in Theorem \ref{thm:Hodge-RiemannH^3}. 
Let 
\[
(\Ker \rho_3/ \Image \gamma_3)_{\bC}:= (\Ker \rho_3/ \Image \gamma_3) \otimes_{\bQ} \bC.
\] 
Since $\rho_3$ and $\gamma_3$ are morphism of Hodge structures, this admits a pure Hodge structure of weight $3$ 
and its $(2,1)$-part admits a pairing $Q_{W_3}$ defined by  
\[
Q_{W_3} (\eta, \ol{\eta}):= -\sqrt{-1} \langle \eta, \ol{\eta} \rangle
\]
for $\eta \in (\Ker \rho_3/ \Image \gamma_3)_{\bC}^{2,1}$ induced by the cup product $\langle \bullet, \bullet \rangle$ on $H^3(X^{(1)}, \bC)$. 
The pairing descends to $\Ker \rho_3/ \Image \gamma_3$ by the adjoint property of $\rho$ and $\gamma$. 

Similarly, let 
\[
(\Ker \gamma_4)_{\bC}:= \Ker \gamma_4 \otimes_{\bQ} \bC \subset H^2(Y^{(2)}, \bC).
\] 
Then $(\Ker \gamma_4)_{\bC}$ admits a pure Hodge structure of weight $2$ and its $(1,1)$-part admits a pairing $Q_{W_2}$ defined by 
\[
Q_{W_2}(\xi, \ol{\xi}):= - \langle \xi, \ol{\xi} \rangle
\] 
for $\xi \in (\Ker \gamma_4)_{\bC}^{1,1}$ induced by the cup product $\langle \bullet, \bullet \rangle$ on $H^2(X^{(2)}, \bC)$. 

Hence the condition (*) in Theorem \ref{thm:Hodge-RiemannH^3} makes sense. 
\end{rem}

\begin{rem}\label{rem:definitecondition}
Let us discuss when the above condition (*) holds. It holds when $(\Ker \rho_3/ \Image \gamma_3)^{2,1}_{\bC}$ and $(\Ker \gamma_4)^{1,1}$ are primitive. 

The pairing $Q_{W_3}$ on $(\Ker \rho_3/ \Image \gamma_3)^{2,1}_{\bC}$ is positive definite if $(\Image \gamma_3)^{2,1}$ contains some subspace $V_1$ 
such that  $\dim V_1 = h^1(X^{(1)}, \cO)$ and $- \sqrt{-1} \langle \bullet , \bullet \rangle$ is negative definite on $V_1$. 
This holds if $H^1(X^{(1)}, \cO) =0$, for example.

Since we have $\Ker \gamma_4 = (\Image \rho_2)^{\perp}(-1)$, we see that $Q_{W_2}$ is positive definite if $(\Image \rho_2)^{1,1}$ 
contains a subspace $V_2$ such that $\dim V_2 = h^0(X^{(2)}, \bC)$ and the cup product is positive definite on $V_2$. 
\end{rem}

We also use the following fact of linear algebra. 

\begin{lem}\label{lem:bilinearortho}
Let $(V, \langle \bullet , \bullet \rangle)$ be a finite dimensional $\bC$-vector space with a non-degenerate (symmetric) bilinear form. Let $W \subset V$ be a linear subspace and $W^{\perp}$ be its orthogonal complement. Then we have;
\begin{enumerate}
\item[(i)] $\dim W + \dim W^{\perp} = \dim V$.  
\item[(ii)] $W= (W^{\perp})^{\perp}$. 
\end{enumerate} 
\end{lem}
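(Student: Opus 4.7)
The plan is to deduce both parts from the standard fact that a non-degenerate bilinear form on $V$ induces an isomorphism $\flat\colon V \xrightarrow{\sim} V^*$ given by $v \mapsto \langle v, \bullet\rangle$, together with rank--nullity.

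For (i), I would consider the linear map
\[
\Phi \colon V \to W^*, \qquad v \mapsto \langle v, \bullet \rangle|_W.
\]
By definition of the orthogonal complement, $\Ker \Phi = W^{\perp}$. For surjectivity, I would note that $\Phi$ factors as the composition of the isomorphism $\flat\colon V \to V^*$ with the restriction map $V^* \to W^*$, the latter being surjective since any linear functional on $W$ extends to one on $V$. Rank--nullity then gives
\[
\dim V = \dim W^{\perp} + \dim W^* = \dim W^{\perp} + \dim W.
\]

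For (ii), the inclusion $W \subset (W^{\perp})^{\perp}$ is immediate from the definition (every element of $W$ pairs to zero with every element of $W^{\perp}$). To get equality, I would apply (i) twice:
\[
\dim (W^{\perp})^{\perp} = \dim V - \dim W^{\perp} = \dim W,
\]
so the inclusion must be an equality.

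There is no serious obstacle here; this is standard linear algebra, and the only point requiring a nanosecond of care is the surjectivity in (i), which uses non-degeneracy of the form on the whole space $V$ (rather than only on $W$).
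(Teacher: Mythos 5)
Your proof is correct and follows essentially the same route as the paper: both identify $W^{\perp}$ with the preimage under the isomorphism $V\simeq V^*$ of the annihilator of $W$ (you phrase this via the map $V\to W^*$ and rank–nullity), and both deduce (ii) from the obvious inclusion plus the dimension count from (i).
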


\begin{proof}
(i) is standard since the bilinear form induces an isomorphism $V \to V^*$ to the dual space and $W^{\perp}$ is the pull-back of $\{f \in V^* \mid f|_W =0 \}$. 
(ii) follows from $W \subset (W^{\perp})^{\perp}$ and $\dim (W^{\perp})^{\perp} = \dim V - \dim W^{\perp} = \dim W$.  
 \end{proof}

\begin{proof}[Proof of Theorem \ref{thm:Hodge-RiemannH^3}] 
Since the cup product is definite on $((\Image \rho_2)^\perp)^{1,1}$ and the $(2,0)$-part and the $(0,2)$-part of $H^2(Y^{(2)})$ are primitive, 
we see that the cup product is non-degenerate on $(\Image \rho_2)^{\perp}$, thus non-degenerate on $\Image \rho_2 = ((\Image \rho_2)^{\perp})^{\perp}$ as well by Lemma \ref{lem:bilinearortho}. Hence we see that $H^3(Y_s, \bC)$ admits a pure Hodge structure by Theorem \ref{thm:intro2}. 

Let $\cF^3, \cF^2 \subset \cH^3$ be the holomorphic subbundles as in Theorem \ref{thm:hodgejunbi}(iv). 
We have 
\[
F^3= I^{3,0} \oplus I^{3,1} \subset F^2 = I^{2,0} \oplus I^{2,1} \oplus I^{3,0} \oplus I^{2,2} \oplus I^{3,1}, 
\]
as in Proposition \ref{prop:limitMHSIpq}(ii). 
In order to show the positive definiteness as in Theorem \ref{thm:Hodge-RiemannH^3}, 
it is enough to calculate the signature of the form 
\[
\Phi_s \colon F^2_s \to \bR; \ \ \ \  \eta \mapsto Q(\eta, \bar{\eta})  
\]
for a small $s \in \Delta^*$. 
We know that the form on $F^3_s$ is negative definite by $\sqrt{-1}\langle \eta, \bar{\eta} \rangle >0$ for 
$0 \neq \eta \in H^0(Y_s, \Omega^3_{Y_s})$. 
Hence it is enough to find a subspace $W_s \subset F^2_s$ such that $\Phi_s|_{W_s}$ is positive definite and 
$\dim W_s = \dim F^2_s/ F^3_s$. 

Let $u_{\alpha}^{i,j} \in I^{i,j}$ ($\alpha =1,\ldots ,h^{i,j}$) be a $\bC$-basis for $I^{i,j} \subset F^2$ as in the proof of Theorem \ref{thm:main}. 
We choose $u_{\alpha}^{i,j}$'s so that the following holds. 


\begin{claim}\label{claim:ualphabetaproducts}
\begin{enumerate}
\item[(i)]We may take $u_{\alpha}^{2,1}$'s and $u_{\alpha}^{3,0}$'s so that 
\[
Q(u^{2,1}_{\alpha}, \ol{u^{2,1}_{\beta}}) = \delta_{\alpha \beta},  \ \ Q(u^{3,0}_{\alpha}, \ol{u^{3,0}_{\beta}}) = -\delta_{\alpha \beta}
\]
where $\delta_{\alpha \beta}$ is the Kronecker delta. 
\item[(ii)] We may also take $u_{\alpha}^{2,2}$'s and $u_{\alpha}^{1,3}$'s so that 
\[
Q(u_{\alpha}^{1,1}, \ol{u_{\beta}^{2,2}}) = \delta_{\alpha \beta}, \ \  Q(u_{\alpha}^{2,0}, \ol{u_{\beta}^{3,1}}) = -\delta_{\alpha \beta}. 
\]
\item[(iii)] We also have 
$
Q(u^{i,j}_{\alpha}, \ol{u^{k,l}_{\beta}}) = 0 
$ unless $(k,l)=(3-j,3-i)$. 
\end{enumerate}
\end{claim}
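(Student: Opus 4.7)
The plan is to prove (iii) first, as it is a structural orthogonality statement about the limiting polarization, and then to use the resulting block decomposition to perform the normalizations (i) and (ii) independently on each Hodge piece.

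For (iii), I will combine two vanishing principles. First, the cup product satisfies $\langle W_a, W_b\rangle = 0$ whenever $a+b < 2n = 6$, so any pairing with total weight $(i+j)+(k+l) < 6$ vanishes immediately. Second, on the polarized limit mixed Hodge structure the Deligne bigrading is orthogonal under $Q$: one has $Q(I^{p,q}, I^{p',q'}) = 0$ unless $p+p' = 3$ and $q+q' = 3$, via the standard argument using $F^p = \bigoplus_{r\geq p} I^{r,\bullet}$, the identity $Q(F^p, F^{4-p}) = 0$, and its complex conjugate. Writing $\ol{u^{k,l}_\beta} = \tilde u^{l,k} + w$ with $\tilde u^{l,k} \in I^{l,k}$ and $w \in \bigoplus_{r<l,\, s<k} I^{r,s}$ (of strictly lower weight), in our weight range $\{2,3,4\}$ the correction $w$ is killed either by weight orthogonality or by the same bigrading argument, while the leading term vanishes unless $(l,k) = (3-i, 3-j)$, i.e.\ $(k,l) = (3-j, 3-i)$.

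For (i), step (iii) leaves $(u,v) \mapsto Q(u, \bar v)$ as the only non-trivial Hermitian form on $I^{2,1}$; under the identification $W_3/W_2 \simeq \Ker \rho_3/\Image \gamma_3$ of Proposition \ref{prop:H3CYdesc}(ii) it coincides with $Q_{W_3}$, so hypothesis (*) provides positive definiteness and I take an orthonormal basis. On $I^{3,0}$, the same identification embeds $I^{3,0}$ into $\bigoplus_j H^{3,0}(Y_j)$, and a direct local computation with $\omega = f\, dz_1\wedge dz_2\wedge dz_3$ gives $\int_{Y_j} \omega\wedge \ol{\omega}$ negative imaginary, so $Q = -\sqrt{-1}\langle\cdot,\cdot\rangle$ is negative definite and I normalize to $-\delta_{\alpha\beta}$. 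For (ii), using the convention $u^{1,1}_l = N u^{2,2}_l$, $u^{2,0}_l = N u^{3,1}_l$ fixed in the proof of Theorem \ref{thm:main}, the pairings in question become values of the induced graded polarization $(x,y) \mapsto Q(N\tilde x, \tilde y)$ on $\Gr^W_4 \simeq \Ker \gamma_4$. After the Tate twist, the $V^{2,2}$-piece matches $(\Ker\gamma_4)^{1,1}_{\bC}$ with $Q_{W_2}$, positive definite by (*), giving $+\delta_{\alpha\beta}$; the $V^{3,1}$-piece matches $(\Ker\gamma_4)^{2,0} \subset H^{2,0}(Y^{(2)})$, where a local computation with $\alpha = f\, dz_1\wedge dz_2$ on a smooth surface gives $\int \alpha \wedge \ol{\alpha} > 0$, so $Q_{W_2} = -\langle\cdot,\cdot\rangle$ is negative definite automatically, yielding $-\delta_{\alpha\beta}$.

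The main obstacle I expect is the careful bookkeeping of sign and Tate-twist conventions. One must verify that the induced graded polarization $Q(N\cdot, \cdot)$ on $\Gr^W_4$ (which differs from $Q(\cdot, N\cdot)$ only by the skew-symmetry of $N$) agrees precisely with the explicit $Q_{W_3}, Q_{W_2}$ defined via cup products on $H^3(Y^{(1)})$ and $H^2(Y^{(2)})(-1)$, including the $(2\pi\sqrt{-1})^{-1}$ normalization of $N' = N/(2\pi\sqrt{-1})$ versus the bare $N$ used in the polarization. Checking that these factors conspire to produce exactly the $+/-$ signs of (i) and (ii) for the specific basis convention from Theorem \ref{thm:main}'s proof is the most delicate part.
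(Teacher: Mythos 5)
Your overall architecture matches the paper's: part (iii) via orthogonality of the Deligne bigrading, then (i) and (ii) by identifying the Hermitian forms on the graded pieces with $Q_{W_3}$ and $Q_{W_2}$ and invoking the hypothesis (*) together with the automatic definiteness on the primitive $(3,0)$- and $(2,0)$-parts. Your part (iii) (combining $\langle W_a, W_b\rangle =0$ for $a+b<6$ with $Q(F^p,F^{4-p})=0$ and the near-reality of the $I^{p,q}$, handling the $W_3$-correction terms separately) is essentially the argument the paper delegates to \cite{Chen:2024aa}, and your sign computations for the $(3,0)$- and $(2,0)$-pieces are correct.

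The gap is the step you yourself flag as ``the most delicate part'' and then do not carry out: the assertion that the pairing induced on $\Gr^W_3 \simeq (\Ker\rho_3/\Image\gamma_3)_{\bC}$ by the cup product of $H^3(Y_s)$ \emph{coincides} with the pairing $Q_{W_3}$ defined via the cup product on $H^3(Y^{(1)})$, and likewise that the pairing between $W_4/W_3$ and $W_2$, transported by $N$, is computed (up to the $2\pi\sqrt{-1}$ normalization) by the cup product on $H^2(Y^{(2)})$. This is not formal: the limiting pairing lives on $\bH^3(Y,\Lambda_Y^{\bullet})$, and its compatibility with the $E_1$-terms of the weight spectral sequence --- cup products on the strata, with $N$ acting as $2\pi\sqrt{-1}$ times the identity between $\Gr^W_1 A_{\bC}$ and $\Gr^W_{-1}A_{\bC}$ --- is exactly what the paper extracts from Fujisawa's theory of polarized cohomological mixed Hodge complexes, via \cite[(5.22.2), 5.23, Lemma 6.13 and the proof of Theorem 8.11]{MR3395244}. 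Without this input, or an equivalent direct computation on the log de Rham complex, (i) and (ii) do not follow from (*), because (*) is a hypothesis on the explicit pairings on the strata, not on the a priori unidentified graded pieces of the limiting pairing. So your proof is structurally correct but incomplete at precisely its load-bearing point.
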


\begin{proof}[Proof of Claim] We use the notations and results in \cite{MR3395244}. 
Let $((A_{\bQ}, W), (A_{\bC}, W, F), \alpha)$ be the cohomological mixed Hodge complex as in \cite[Definition 5.18]{MR3395244} such that 
$H^k(Y, \Lambda_Y^{\bullet}) \simeq H^k (A_{\bC})$ for $k \in \bZ$.  


\noindent(i) By Proposition  \ref{prop:H3CYdesc}(ii), we have   
\[
\Gr^W_3 H^3(Y, \Lambda_Y^{\bullet}) \simeq (\Ker \rho_3/ \Image \gamma_3) \otimes_{\bQ} {\bC} 
\] 
Note that the pairing on this is induced by the pairing on $H^3(Y, \Gr^W_0 A_{\bC})$, where 
\[
\Gr^W_0 A_{\bC} \simeq \bigoplus_{i=1}^l \Omega^{\bullet}_{Y_i} 
\]
by \cite[(5.22.2)]{MR3395244}.
By \cite[Lemma 6.13 and the proof of Theorem 8.11]{MR3395244}, we see that the pairing is induced by the cup product on $H^3(Y_i, \bC)$, thus we obtain the claim by the positive definiteness of the pairing $Q_{W_3}$ on $(\Ker \rho_3/ \Image \gamma_3)_{\bC}^{2,1}$ (Note that $(\Ker \rho_3/ \Image \gamma_3)_{\bC}^{3,0}$ is primitive and a similar form on $(\Ker \rho_3/ \Image \gamma_3)_{\bC}^{3,0}$ is definite).  


\noindent(ii) By \cite[(5.22.2)]{MR3395244}, we obtain  
\[
\Gr^W_1 A_{\bC} \simeq \bigoplus_{1 \le i<j \le l} \Omega^{\bullet}_{Y_{ij}}[-1] \simeq \Gr^W_{-1} A_{\bC},  
\]   
where $Y_{ij}= Y_i \cap Y_j$. 
Recall that, by Proposition \ref{prop:H3CYdesc}(ii), we have  
\[
W_4/W_3 \simeq \Ker \gamma_4, \ \ W_2 \simeq \Coker \rho_2. 
\]
We see that the pairing on $W_4/W_3 \times W_2$ is induced by the pairing on $H^3(\Gr^W_1 A_{\bC}) \times H^3( \Gr^W_{-1} A_{\bC})$ which is induced by the cup products on $H^2(Y_{ij}, \bC)$ by \cite[Lemma 6.13 and the proof of Theorem 8.11]{MR3395244}. 
Note that the monodromy logarithm $N \colon H^3(\Gr^W_1 A_{\bC}) \to H^3( \Gr^W_{-1} A_{\bC})$ is induced by  
$2\pi \sqrt{-1} \id $ on $H^2(Y_{ij})$ since $N_A$ in \cite[5.23]{MR3395244} is induced by $2 \pi \sqrt{-1} \pi_r$, 
where $\pi_r \colon \omega_Y^{p+1}/W_r \to \omega_Y^{p+1}/W_{r+1}$ is the projection for the log de Rham complex $\omega_Y^{\bullet}$ and the weight filtration $W_{\bullet}$ on it.  
By this and the definiteness of $Q_{W_2}$ on $(\Ker \gamma_4)^{1,1}$, 
we obtain the former claim. The latter claim follows since $(\Ker \gamma_4)^{2,0}$ is primitive.  



\noindent(iii) This follows from the argument as in \cite[Proof of (4) before Lemma 6.1]{Chen:2024aa}. 
\end{proof}

Let $\varphi \colon \fH \to \Delta^*; z \mapsto s=\exp (2\pi \sqrt{-1}z)$ be the universal cover. 
Let $U_{\alpha}^{i,j}(z) \colon \mathfrak{H} \to H^3$ be the function corresponding to the holomorphic section $U_{\alpha}^{i,j}(s) \in \cF^2$ which is a lift of $u_{\alpha}^{i,j} \in F^2$ as in the proof of Theorem \ref{thm:main}. 

We consider a small $s \in \Delta^*$ and assume that $z= x+ \sqrt{-1} y$ ($x, y \in \bR$) satisfies that $x$ lies in a bounded strip and $y$ is sufficiently large. 
Let 
\[
V_{\alpha}^{2,0}(z):= U_{\alpha}^{2,0}(z) -2z U_{\alpha}^{3,1}(z) \in (\varphi^* \cF^2)_z \simeq F^2_s. 
\] 
Let $W_s \subset F^2_s$ be the subspace generated by 
\begin{multline}
B(z):= 
 \{V_{\alpha}^{2,0}(z) \mid \alpha = 1, \ldots , h^{2,0} \} \cup \{U_{\alpha}^{2,1}(z) \mid \alpha =1,\ldots , h^{2,1} \} \\
  \cup 
\{U_{\alpha}^{2,2}(z) \mid \alpha = 1, \ldots , h^{2,2} \}. 
\end{multline}
We can check $\dim W_s = \dim F^2_s/ F^3_s$. 


Let $u_{\alpha}^{i,j}(z)= e^{zN} \cdot u_{\alpha}^{i,j} \in \varphi^{*} \cH^3|_{\Delta^*}$  be the $H^3$-valued function determined by $u_{\alpha}^{i,j} \in H^3$. Then we have 
\[
u_{\alpha}^{2,0}(z) = u_{\alpha}^{2,0}, \ \ 
u_{\alpha}^{2,1}(z) = u_{\alpha}^{2,1}, 
\]
\[ 
u_{\alpha}^{2,2}(z) = u_{\alpha}^{2,2} + z u_{\alpha}^{1,1}, \ \ u_{\alpha}^{3,1}(z) = u^{3,1}_{\alpha} + z u^{2,0}_{\alpha},
\]
where $u_{\alpha}^{1,1}= N(u_{\alpha}^{2,2})$ and $u_{\alpha}^{2,0}= N(u_{\alpha}^{3,1})$, respectively. 
Then we may write 
\[
U_{\alpha}^{2,1}(z) = \sum A_{\beta}^{i,j} u_{\beta}^{i,j} (z), \ \ U_{\alpha}^{2,2}(z) = \sum B_{\beta}^{i,j} u_{\beta}^{i,j} (z), 
\]
\[
 V_{\alpha}^{2,0}(z) = \sum (C_{1,\beta}^{i,j}-2z C_{2, \beta}^{i,j}) u_{\beta}^{i,j} (z), 
\]
for some holomorphic functions $A_{\beta}^{i,j}, B_{\beta}^{i,j}, C_{1, \beta}^{i,j}, C_{2, \beta}^{i,j}$ on $\Delta$ and we have 
\begin{equation}\label{eq:U^21z}
U_{\alpha}^{2,1}(z) = u_{\alpha}^{2,1} + O\left(y^{-1} \right), 
\end{equation}
\begin{equation}\label{eq:U^22U^13z}
 U_{\alpha}^{2,2}(z) = u_{\alpha}^{2,2}+ z u_{\alpha}^{1,1} + O\left(y^{-1} \right), \ \ 
V_{\alpha}^{2,0}(z) = u_{\alpha}^{3,1} - z u_{\alpha}^{2,0} + O\left(y^{-1} \right), 
\end{equation}
where $y:= \Image z$. (In fact, since 
\[
s= \exp (2\pi \sqrt{-1}(x + \sqrt{-1}y)) = \frac{1}{e^{2\pi y}} \exp(2\pi \sqrt{-1}x),
\] 
the above $O(y^{-1})$'s are also $O(e^{-y})$). 

We consider the intersection matrix of $Q(\bullet, \bullet)$ for the basis $B(z)$. 

Let 
\[
Q_{11}:= \begin{pmatrix} Q\left( U_{\alpha}^{2,1} (z), \ol{U_{\beta}^{2,1}(z)} \right) \end{pmatrix}_{1\le \alpha, \beta \le h^{2,1}}, 
\]
\[
Q_{22}:= \begin{pmatrix} Q\left(U_{\alpha}^{2,2} (z), \ol{U_{\beta}^{2,2}(z)} \right) \end{pmatrix}_{1\le \alpha, \beta \le h^{2,2}}, 
\]
\[
Q_{33}:= \begin{pmatrix} Q \left( V_{\alpha}^{2,0}(z), \ol{V_{\beta}^{2,0}(z)} \right) \end{pmatrix}_{1\le \alpha, \beta \le h^{2,0}}, 
\]
\[ 
Q_{12}:= \begin{pmatrix} Q \left( U_{\alpha}^{2,1} (z), \ol{U_{\beta}^{2,2}(z)} \right) \end{pmatrix}_{\substack{1 \le \alpha \le h^{2,1}, \\ 1\le \beta \le h^{2,2} }}, 
\]
\[
Q_{13}:= \begin{pmatrix} Q \left( U_{\alpha}^{2,1} (z), \ol{V_{\beta}^{2,0}(z)} \right) \end{pmatrix}_{\substack{1 \le \alpha \le h^{2,1}, \\ 1\le \beta \le h^{2,0} }}, 
\] 
\[
Q_{23}:= \begin{pmatrix} Q \left( U_{\alpha}^{2,2} (z), \ol{V_{\beta}^{2,0}(z)} \right) \end{pmatrix}_{\substack{1 \le \alpha \le h^{2,2}, \\ 1\le \beta \le h^{2,0} }}.  
\]
Then the associated matrix $Q$ for the quadratic form $Q(\bullet, \bullet)$ is 
\[
Q = \begin{pmatrix}
Q_{11} & Q_{12} & Q_{13} \\
Q_{21} & Q_{22} & Q_{23} \\ 
Q_{31} & Q_{32} & Q_{33}
\end{pmatrix}, 
\]
where $Q_{ji}:= Q_{ij}^*$ is the adjoint matrix for $1 \le i < j \le 3$. 

By the equation (\ref{eq:U^22U^13z}) and Claim \ref{claim:ualphabetaproducts}, we see that 
\begin{multline}\label{eq:Q22approximation}
Q \left( U_{\alpha}^{2,2}(z), \ol{U_{\beta}^{2,2}(z)} \right) = 
Q \left( u_{\alpha}^{2,2}+ z u_{\alpha}^{1,1} + O\left(y^{-1} \right), \ol{u_{\beta}^{2,2}} + \ol{z} \ol{u_{\beta}^{1,1}} + O\left(y^{-1} \right) \right)
\\ = z Q \left( u_{\alpha}^{1,1}, \ol{u_{\beta}^{2,2}} \right) + \ol{z} Q \left( u_{\alpha}^{2,2}, \ol{u_{\beta}^{1,1}} \right) + \Image (z) \cdot O\left(y^{-1} \right) 
= \begin{cases}
2 \Image (z) \left( 1 + O\left(y^{-1} \right) \right) & (\alpha = \beta) \\ 
\Image (z) \cdot O\left(y^{-1} \right) & ( \alpha \neq \beta)
\end{cases}. 
\end{multline}
Hence we obtain 
\[
Q_{22} = 2 \Image (z) \cdot \left( E_2 + O\left(y^{-1} \right) \right), 
\]
where $E_2$ is the identity matrix of size $h^{2,2}$. 

Similarly, by the equation (\ref{eq:U^22U^13z}), we see that 
\begin{multline}\label{eq:Q33approximation}
Q \left( V_{\alpha}^{2,0}(z), \ol{V_{\beta}^{2,0}(z)} \right) 
= Q \left( u_{\alpha}^{3,1} - z u_{\alpha}^{2,0} + O\left(y^{-1} \right) , \ol{u_{\beta}^{3,1}} - \ol{z} \ol{u_{\beta}^{2,0}} + O\left(y^{-1} \right) \right)  \\ 
= -z Q\left( u_{\alpha}^{2,0}, \ol{u_{\beta}^{3,1}} \right)- \ol{z} Q \left( u_{\alpha}^{3,1}, \ol{u_{\beta}^{2,0}} \right) + \Image (z) \cdot O\left(y^{-1} \right) 
= \begin{cases}
2 \Image (z) \left( 1 + O\left(y^{-1} \right) \right) & (\alpha = \beta) \\ 
\Image (z) \cdot O\left(y^{-1} \right) & ( \alpha \neq \beta)
\end{cases}, 
\end{multline}
thus we obtain  
\[
Q_{33} = 2\Image (z) \cdot \left( E_3 + O\left(y^{-1} \right) \right), 
\]
where $E_3$ is the identity matrix of size $h^{2,0}$. 

By Claim \ref{claim:ualphabetaproducts}, the equation (\ref{eq:U^21z}) and a computation as above in (\ref{eq:Q22approximation}), we have 
\[
Q_{11}(z) = E_{1} + O\left(y^{-1} \right) 
\]
for the identity matrix $E_1$ of size $h^{2,1}$. 
By similar calculations, we also see that
\[
Q_{12}(z) = O(y^{-1}), \ \ Q_{13}(z) = O(y^{-1}), \ \ Q_{23}(z) = O(y^{-1}). 
\]
Hence we have 
\[
Q= 
\begin{pmatrix} 
E_1 & O \\
O & 2y E_{23}
\end{pmatrix} + O(y^{-1}) \ \ (E_{23}:= E_2 \oplus E_3), 
\]
thus $Q$ is positive definite when $y \gg 0$. 
(If necessary, we can also use $\sqrt{2y} U^{2,1}_{\alpha}(z)$ instead of $U^{2,1}_{\alpha}(z)$ in the basis $B(z)$ so that $Q$ becomes $2yE + O(y^{-1})$ with respect to that choice of the basis.) 

Hence we obtain the required positive definiteness of the forms on $W_s$ and $H_s^{2,1}$. 
%
%
%
%
\end{proof}

By Theorem \ref{thm:Hodge-RiemannH^3} and Remark \ref{rem:definitecondition}, we obtain the following.

\begin{proof}[Proof of Corollary \ref{cor:Tyurinh^1=0case}]
The definiteness of $Q_{W_3}$ follows from $H^1(Y_j, \cO) =0$ by Remark \ref{rem:definitecondition}. 

Since $\Image \rho_2$ contains images of ample classes on $Y_j$, we obtain the definiteness of $Q_{W_2}$ by Remark \ref{rem:definitecondition} as well. 
\end{proof}

\begin{rem}
In Corollary \ref{cor:Tyurinh^1=0case}, the examples in \cite{MR4584262} are covered. 
We can check that the examples in \cite{Sano:2022aa} satisfies the conditions on the pairings which was already written in \cite{Chen:2024aa}. 
We can also check that the degeneration of Clemens manifolds as in Example \ref{eg:Clemensmfd} satisfies the condition (*).  
\end{rem}

\section{Examples}

\subsection{Remarks on Hodge--Riemann line bundles}	

Ample classes (or K\"{a}hler classes) satisfy the Hard Lefschetz property and the Hodge--Riemann bilinear relations. 
However, not only these are the classes with such properties. 

\begin{defn}
Let $X$ be a compact complex manifold of dimension $n$ with a pure Hodge structures on $H^k(X, \bC)$ for $k \in \bZ$ as in Proposition \ref{prop:pureHodgekopposed} and Remark \ref{rem:putativeHodge}(i).  
Let $\cL$ be a holomorphic line bundle on $X$. 
\begin{enumerate}
\item[(i)] We say that $\cL$ is {\it Lefschetz} if the operator
\[
\cup c_1(\cL)^i \colon H^{n-i}(X, \bQ) \rightarrow H^{n+i}(X, \bQ)
\] is an isomorphism 
for $i=1,\ldots ,n$. 
\item[(ii)]  We say that $\cL$ is {\it Hodge-Riemann} if $\cL$ is Lefschetz and the Hodge-Riemann relation holds for $\cL$, that is, 
on the primitive cohomology $P^k:=P^k_{\cL}:= \Ker (\cup c_1(\cL)^{n-k+1}) \subset  H^k(X, \bC)$ with the decomposition $P^k = \bigoplus P^{p,k-p}$, 
the pairing determined by 
\[
(\alpha, \beta):= (-1)^{\frac{1}{2}k(k-1)} \sqrt{-1}^{2p-k} \int  \alpha \cup \ol{\beta} \cup c_1(\cL)^{n-k}
\]
for $\alpha, \beta \in P^{p,k-p}$ 
is positive definite for $k \le n$. 
\end{enumerate}
\end{defn}

As in the following examples, there are Lefschetz or Hodge--Riemann line bundles which are not ample. 

\begin{eg}
\begin{enumerate}
\item[(i)] Let $\cL$ be an ample line bundle on a smooth projective variety $X$. 
Then it is well known that $\cL$ is Lefschetz by the hard Lefschetz theorem. 
On the other hand, its dual $\cL^{-1}$ is also Lefschetz. 
\item[(ii)] Let $X:= \bP^1 \times \bP^1$ and $\cL:= p_1^*\cO_{\bP^1}(a) \otimes p_2^*\cO_{\bP^1}(b)$ for the projections $p_1, p_2$. 
Then we can easily check that $\cL$ is Lefschetz if and only if $ab \neq 0$ since it is equivalent to $c_1(\cL)^2 \neq 0$. 
We can also check that $\cL$ is Hodge--Riemann if and only if $ab >0$. 
\item[(iii)] Let $\mu \colon \tilde{X} \to X$ be a blow-up along a smooth subvariety $Z \subset X$ of codimension $2$ and $\cL$ be an ample line bundle on $X$. 
Then it is known that $\mu^* L$ is a Hodge--Riemann line bundle (cf. \cite[Theorem 2.3.1]{MR1951443}). 
\item[(iv)] Let $X= \bP^1 \times \bP^1 \times \bP^1$ and let $\cL:= \cO_X(a_1 F_1 +a_2 F_2 + a_3 F_3)$, where $F_i:= p_i^* \cO_{\bP^1}(1)$ is the fiber class of 
the projection $p_i$ for $i=1,2,3$. We can compute that $\cL$ is Hodge--Riemann if and only if $a_1 a_2 a_3 >0$. For example, $-F_1 -F_2 + F_3$ is not ample, but Hodge--Riemann. 
\end{enumerate}
\end{eg}

Guillen--Navarro Aznar \cite[(5.2) Th\'{e}or\`{e}me]{MR1068383} proved that, for a K\"{a}hler semistable degeneration $\phi \colon \cX \to \Delta$,
 the monodromy nilpotent operator induces isomorphisms on the graded pieces of $H^k$. 
 The same proof shows the following. 

\begin{prop}\label{prop:HodgeRiemannMonodromyIsom}
Let $\cX \to \Delta$ be a semistable smoothing of a proper SNC variety $X = \bigcup X_i$ and $H^k=H^k(X, \Lambda^{\bullet}_{X})$ be as in 
Theorem \ref{thm:hodgejunbi}. 
Suppose that there exists a line bundle $\cL_0$ on $X$ such that
 $\cL_0|_{D}$ for all irreducible components $D$ of $X^{(l)} = \coprod_{i_1< \cdots < i_l} (X_{i_1} \cap \cdots \cap X_{i_l})$ are Hodge Riemann.  

Then, for $r \ge 0$, the monodromy nilpotent operator $N_k \colon H^k \rightarrow H^k$  
 induces an isomorphism 
 $
 N_k^{[r]} \colon W_{k+r}/ W_{k+r-1} \rightarrow W_{k-r}/ W_{k-r-1}.
 $  
\end{prop}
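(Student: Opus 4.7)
The plan is to follow the strategy of \cite[(5.2) Théorème]{MR1068383} verbatim, substituting the hypothesis that the family is (cohomologically) Kähler—which is used there only to produce a class restricting to a Kähler, hence Hodge–Riemann, class on every stratum—by our hypothesis that the single line bundle $\cL_0$ on $X$ restricts to a Hodge–Riemann line bundle on every irreducible component $D$ of each stratum $X^{(l)}$.

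First, I would start from the weight spectral sequence of Theorem \ref{thm:hodgejunbi}(iii),
\[
{}_{W(M)}E_1^{-r,k+r} = \bigoplus_{l\geq \max\{0,-r\}} H^{k-r-2l}\!\left(X^{(r+2l+1)},\bQ\right)(-r-l) \Rightarrow H^k_{\bQ},
\]
and recall Steenbrink's explicit description: on $E_1$ the monodromy operator $N$ is realized by identity maps sending each summand of ${}_{W(M)}E_1^{-r,k+r}$ to the same stratum-cohomology group viewed as a summand of ${}_{W(M)}E_1^{-r-2,k+r}$, with an appropriate Tate twist. After $E_2$-degeneration, $N^{[r]} \colon \Gr^{W(M)}_{k+r}H^k \to \Gr^{W(M)}_{k-r}H^k$ is induced by iterating this shift, so the problem reduces to a statement about kernels and images of the $d_1$-differentials between direct sums of cohomology of the strata.

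Second, I would equip each $H^{k-r-2l}(X^{(r+2l+1)},\bC)$ with the $\mathfrak{sl}_2$-Lefschetz action coming from $L := c_1(\cL_0)$ restricted componentwise. By hypothesis, $\cL_0|_D$ is Hodge–Riemann on each component $D$ of every stratum, so hard Lefschetz and the Hodge–Riemann bilinear relations hold on $H^\bullet(D,\bC)$. Hence each $E_1^{-r,k+r}$ acquires the structure of a bigraded polarized $\mathfrak{sl}_2$-module. Crucially, the differentials $d_1 = \rho - \gamma$ (restriction minus Gysin, as in Definition \ref{defn:RestGysin}) arise from the face maps between strata, and since $\cL_0$ is a \emph{single global} line bundle on $X$, the classes $\cL_0|_{X^{(m)}}$ are compatible with pullback along these face maps. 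Consequently $d_1$ commutes with multiplication by $L$ up to the appropriate bidegree shift, and with the Gysin maps by the projection formula.

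Third, I would invoke Deligne's polarized $\mathfrak{sl}_2$-lemma (cf.\ \cite[\S 4]{MR1068383}): a morphism of polarized $\mathfrak{sl}_2$-modules has polarized kernel, image and cokernel. Applied to $d_1$, this endows the $E_2$-page—which coincides with $\Gr^{W(M)}_\bullet H^k$—with a polarized bigraded $\mathfrak{sl}_2$-action. Identifying $N$ on these graded pieces with the lowering operator of this $\mathfrak{sl}_2$-representation, the hard Lefschetz property for finite-dimensional $\mathfrak{sl}_2$-representations then yields the desired isomorphisms $N^{[r]} \colon \Gr^{W(M)}_{k+r}H^k \xrightarrow{\sim} \Gr^{W(M)}_{k-r}H^k$.

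The principal obstacle is the verification in the second step that $d_1$ truly respects the Lefschetz $\mathfrak{sl}_2$-action determined by $L$. In the Kähler case of \cite{MR1068383} this compatibility is automatic since the Kähler class lives on the total space $\cX$ and restricts compatibly under all face maps. In our setting the same compatibility is available because $\cL_0$ is already defined on the whole SNC variety $X$, so its restrictions are pullbacks along $\delta_1,\delta_2$ and the Gysin/restriction maps automatically intertwine the Lefschetz operators up to twist. Once this compatibility is established, the rest of the argument is the formal polarized $\mathfrak{sl}_2$-lemma applied at $E_2$, exactly as in the projective case.
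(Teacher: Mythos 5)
Your proposal is correct and follows essentially the same route as the paper, which simply cites \cite[(5.2) Th\'{e}or\`{e}me]{MR1068383} and \cite[Theorem 11.40]{MR2393625} and notes that only the Lefschetz property of $\cL_0|_D$ and the positive definiteness of the pairings are used there. Your expanded account --- the $\mathfrak{sl}_2$-action from $c_1(\cL_0)$ on the $E_1$-terms, the compatibility of $d_1$ with $L$ via functoriality and the projection formula, and the polarized Hodge--Lefschetz lemma applied at $E_2$ --- is exactly the argument the paper is invoking.
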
	

\begin{proof}
The proof of the isomorphism is parallel to that of \cite[Theorem 11.40]{MR2393625} or \cite[(5.2) Th\'{e}or\`{e}me]{MR1068383} 
in which only the Lefschetz property of $\cL_0|_{D}$ and the positive definiteness of the pairings are essential.  
\end{proof}


Motivated by Proposition \ref{prop:HodgeRiemannMonodromyIsom}, we look for line bundles on SNC varieties whose 
restriction to irreducible components are Lefschetz or Hodge--Riemann in the following examples. 
	
\begin{eg}
Clemens \cite[pp.229]{MR444662} exhibited an example of a semistable degeneration $\cS \to \Delta$ of a Hopf surface 
such that $\cS_0$ is a normal crossing surface $\ol{\bF_1}$ which is constructed by identifying 
negative section $\sigma_0$ and a positive section $\sigma_{\infty}$ of the Hirzebruch surface $\bF_1$. 
Let $\ol{\sigma} \subset \cS_0$ be the singular locus. 

To make the computation easier, we make a birational modification so that the semistable degeneration has a SNC central fiber. 
Let $\beta \colon \Delta \to \Delta$ be a double cover defined by $\beta(t) = t^2$, $\cT:= \cS \times_{\Delta} \Delta$ be 
the fiber product by $\beta$ and $\mu \colon \tilde{\cT} \to \cT$ be the blow-up along $\beta_{\cT}^{-1} (\ol{\sigma}) \subset \cT$, 
where $\beta_{\cT} \colon \cT \to \cS$ is the base change of $\beta$. Then we have the commutative diagram 
\[
\xymatrix{
\tilde{\cT} \ar[r]^{\mu} \ar[rd]_{\phi_{\tilde{\cT}}} & \cT \ar[r]^{\beta_{\cT}} \ar[d]^{\phi_{\cT}} & \cS \ar[d]^{\phi_{\cS}} \\
 & \Delta \ar[r]^{\beta} & \Delta
}. 
\]
We see that $\phi_{\tilde{\cT}} \colon \tilde{\cT} \to \Delta$ is semistable and $\tilde{\cT}_0 = \tilde{T} \cup E$ is an SNC surface 
such that $\tilde{T} \simeq E \simeq \bF_1$. (Indeed, $\cT$ has $A_1$-singularities along $\beta_{\cT}^{-1} (\ol{\sigma})$, 
thus its blow-up induces a semistable degeneration $\tilde{\cT} \to \Delta$. ) 
We also see that $\tilde{T} \cap E = C_1 \cup C_2$ and, if $C_1 \subset \tilde{T}$ is a negative section with $C_1^2 =-1$ on $T$, 
then $C_1 \subset E$ is a positive section with $C_1^2 =1$ on $E$.  
($C_2$ is the positive section on $\tilde{T}$ and the negative section on $E$.) 

\begin{claim}\label{claim:LefschetzHopf}
There exists $\cL_0 \in \Pic \tilde{\cT}_0$ such that $\cL_1:= \cL_0|_{\tilde{T}} \in \Pic \tilde{T}$ and $\cL_2:= \cL_0|_{E} \in \Pic E$  are Lefschetz and $\cL_1|_{\tilde{T} \cap E} = \cL_2|_{\tilde{T} \cap E}$ is ample.  
\end{claim}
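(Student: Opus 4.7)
The plan is to exhibit $\cL_0$ by explicit gluing. A line bundle on the SNC surface $\tilde{\cT}_0 = \tilde{T} \cup E$ is determined by a pair $(\cL_1, \cL_2) \in \Pic \tilde{T} \times \Pic E$ together with an identification $\cL_1|_{C_1 \cup C_2} \simeq \cL_2|_{C_1 \cup C_2}$; since each $C_i \simeq \bP^1$, such an identification exists iff the degrees match on each double curve. So I would first fix bases of the Picard groups and then translate the gluing condition, the Lefschetz condition, and the ampleness condition on $C_1 \cup C_2$ into elementary numerical inequalities.

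Using $\tilde{T} \simeq E \simeq \bF_1$, write $\Pic \tilde{T} = \bZ \sigma_{\tilde{T}} \oplus \bZ f_{\tilde{T}}$ with $\sigma_{\tilde{T}} := C_1$ the negative section on $\tilde{T}$ (so $C_2 \sim \sigma_{\tilde{T}} + f_{\tilde{T}}$), and $\Pic E = \bZ \sigma_E \oplus \bZ f_E$ with $\sigma_E := C_2$ the negative section on $E$ (so $C_1 \sim \sigma_E + f_E$). For $\cL_1 = a \sigma_{\tilde{T}} + b f_{\tilde{T}}$ and $\cL_2 = c \sigma_E + d f_E$, intersection-number computations on each Hirzebruch surface give
\[
\deg(\cL_1|_{C_1}) = b-a, \quad \deg(\cL_1|_{C_2}) = b, \quad \deg(\cL_2|_{C_1}) = d, \quad \deg(\cL_2|_{C_2}) = d-c,
\]
so the gluing condition forces $c = -a$ and $d = b-a$. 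On a rational surface the Lefschetz property is equivalent to $\cL_i^2 \neq 0$, and one computes $\cL_1^2 = a(2b-a) = -\cL_2^2$. Ampleness of $\cL_1|_{C_1 \cup C_2}$ reduces to positivity of $b-a$ and $b$.

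Taking $(a,b) = (1,2)$ yields $\cL_1 = \sigma_{\tilde{T}} + 2 f_{\tilde{T}}$ (actually ample on $\tilde{T}$, so certainly Lefschetz, with $\cL_1^2 = 3$) and $\cL_2 = -\sigma_E + f_E$ (Lefschetz since $\cL_2^2 = -3 \neq 0$), while the common restriction to $C_1 \cup C_2$ has degrees $(1,2) > 0$ and is therefore ample. This $(\cL_1, \cL_2)$ glues to the required $\cL_0 \in \Pic \tilde{\cT}_0$. There is no real obstacle beyond a sign to keep track of: because $C_2$ is positive on $\tilde{T}$ but negative on $E$ (and symmetrically for $C_1$), the coefficient of $\sigma_E$ in $\cL_2$ is forced to be negative, so $\cL_2$ cannot itself be ample; the point of using the weaker Lefschetz condition is precisely that it tolerates this sign flip.
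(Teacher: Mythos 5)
Your proposal is correct and follows essentially the same route as the paper: parametrize $\Pic\tilde{T}$ and $\Pic E$ by the negative section and fiber, impose the degree-matching condition on $C_1$ and $C_2$ to get $\cL_2$ from $\cL_1$, observe $\cL_1^2=-\cL_2^2$ so Lefschetz reduces to $a(2b-a)\neq 0$, and pick $(a,b)=(1,2)$ (the paper takes the family $(a_1,a_2)=(a,2a)$, which at $a=1$ is exactly your choice). Your closing remark that the forced sign flip prevents $\cL_2$ from being ample matches the paper's subsequent observation that no such pair can make both restrictions Hodge--Riemann.
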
 

\begin{proof}[Proof of Claim]
Let $\cL_1 = a_1 h_1 + a_2 f_1 \in \Pic T$, where $h_1, f_1 \in \Pic T \simeq \Pic \bF_1$ are the negative section and the fiber of $\bF_1$. 
Let $ \cL_2 = b_1 h_2 + b_2 f_2 \in \Pic E$, where $h_2, f_2 \in \Pic E \simeq \Pic \bF_1$ are the negative section and the fiber of $\bF_1$. 
Note that $\cL_1$ and $\cL_2$ can be glued to $\cL_0 \in \tilde{T}_0$ if and only if $\cL_1 \cdot C_i = \cL_2 \cdot C_i$ for $i=1,2$.  
We have 
\begin{multline}\label{eq:L_iC_jintersection}
\cL_1 \cdot C_1 = (a_1 h_1 + a_2 f_1) \cdot h_1 = a_2 -a_1, \ \ \cL_2 \cdot C_1 = (b_1 h_2 + b_2 f_2) \cdot (h_2 + f_2) = b_2, \\ 
\cL_1 \cdot C_2 = (a_1 h_1 + a_2 f_1) \cdot (h_1+ f_1) = a_2, \ \ \cL_2 \cdot C_2 = (b_1 h_2 + b_2 f_2) \cdot (h_2 ) = b_2-b_1. 
\end{multline}
Hence $\cL_1= a_1 h_1 + a_2 f_1$ can be glued with $\cL_2 = (-a_1) h_2 + (a_2 -a_1) f_2$ for all $a_1, a_2 \in \bZ$. 
We see that 
\begin{equation}\label{eq:L_i^2}
\cL_1^2 = a_1(-a_1 + 2a_2), \ \ \cL_2^2 = a_1 (a_1-2a_2).
\end{equation} 
Note that $\cL_i$ is Lefschetz if and only if $\cL_i^2  \neq 0$. 
Note also that $\cL_i|_{C_j}$ is ample for $j=1,2$ if and only if $a_2-a_1>0$ and $a_2>0$ by the above equation (\ref{eq:L_iC_jintersection}).  
Hence, if we let $(a_1, a_2) = (a, 2a)$ for $a>0$ for example, then the induced line bundles 
 $\cL_1 = a(h_1 +2f_1)$ and $\cL_2 = a(-h_2 + f_2)$ satisfy the Lefschetz property and 
 induce an ample line bundle on $\tilde{T} \cap E$. 
\end {proof}

Claim \ref{claim:LefschetzHopf} shows that, in Proposition \ref{prop:HodgeRiemannMonodromyIsom}, requiring the Lefschetz assumption and the ampleness on the intersection are not sufficient.  
Indeed, we can check that, if there exists $\cL_1$ and $\cL_2$ as in Claim \ref{claim:LefschetzHopf}, then $\cL_1$ or $\cL_2$ is not Hodge-Riemann since $\cL_1^2 = - \cL_2^2$ by (\ref{eq:L_i^2}). 

Moreover, since $b_2 (\cS_t) =0$ on a Hopf surface $\cS_t$, we see that $\cL_0 \in \Pic \tilde{T}_0$ induced by the pair $(\cL_1, \cL_2) \in \Pic \tilde{T} \times \Pic E$ is sent to $0$ in $H^2$ of $\tilde{\cT}_s \simeq \cS_t$ (for a small $s \in \Delta^*$ and $t=s^2$) by the composition 
\[
\Pic \tilde{\cT}_0 \to H^2(\tilde{\cT}_0, \bZ) \simeq H^2(\tilde{\cT}, \bZ) \to H^2(\cS_t, \bZ)=0,   
\]
thus it does not induce a ``Lefschetz class'' in $H^2(\cS_t)$. 
\end{eg}
	
\begin{eg}\label{eg:HashimotoSano}
It would be nice if we can exhibit a non-projective SNC Calabi-Yau variety $X = X_1 \cup X_2$ with irreducible components $X_1, X_2$ and a line bundle $\cL_0$ such that 
$\cL_i:= \cL_0|_{X_i}$ for $i=1,2$ and $\cL_{12}:= \cL_0|_{X_{12}}$ on $X_{12}:= X_1 \cap X_2$ are all Hodge--Riemann. 
Although we could not find such line bundles so far, 
we can give an example of a Lefschetz line bundle $\cL_t$ on a non-K\"{a}hler Calabi--Yau 3-fold $X_t$ constructed in \cite{MR4584262} as follows. 

Let $D \in |{-}K_{\bP^1 \times \bP^1 \times \bP^1}|$ be a general member and $\iota \in \Aut D$ an automorphism of infinite order as in \cite{MR4584262}. 
Let $f_j \subset D$ for $j=1,2,3$ be the fiber of the $j$-th projection $D \to \bP^1$. 
Let $a \in \bZ_{>0}$ and $\mu \colon X_1 \to \bP^1 \times \bP^1 \times \bP^1$ be the blow-up along smooth curves 
$$C_1, \ldots , C_a, C_{a+1} \subset D, $$
where $C_1, \ldots , C_a \in |f_1|$ are the disjoint smooth curves and 
\[
C_{a+1} \in |2 \left(f_1+f_2 +f_3 + (\iota^a)^*(f_1+f_2 +f_3)\right)  - af_1 |
\] is a smooth curve 
 as in \cite{MR4584262} and $X_2= \bP^1 \times \bP^1 \times \bP^1$. Let $X:= X_1 \cup X_2$ be a d-semistable SNC Calabi--Yau variety which is determined by 
the isomorphism $\iota^a \circ \mu|_{\tilde{D}_1} \colon \tilde{D}_1 \to D_2$ from the strict transform $\tilde{D}_1 \subset X_1$ of $D$ to $D=D_2 \subset X_2$. 
Let $\cL_0 \in \Pic X$ be the line bundle induced by 
\[
\cL_1 = \mu^* \cO(F_1+ F_2 + F_3), \ \ \cL_2= \cO( (8a^2+1)F_1+ (1+4a) F_2 + (1-4a) F_3),  
\]
where $F_j \subset \bP^1 \times \bP^1 \times \bP^1$ for $j=1,2,3$ be the fiber of the $j$-th projection $ \pi_j \colon \bP^1 \times \bP^1 \times \bP^1 \to \bP^1$. 
Let $\cX \to \Delta$ be a semistable smoothing of $X$ and $X_t$ be its general smooth fiber. 
Let $\cL_t \in \Pic X_t$ be the induced line bundle by the isomorphism $\Pic \cX \simeq \Pic X$ and the restriction $\Pic \cX \to \Pic X_t$. 
By the weight spectral sequence, we have the commutative diagram 
\[
\xymatrix{
H^0(D) \ar[r]^{\gamma_2} \ar[d]^{\cup \cL_D} & \bigoplus_{i=1}^2 H^2(X_i) \ar[r]^{\rho_2} \ar[d]^{(\cup \cL_1, \cup \cL_2)= \phi_{\cL}} & H^2(D) \ar[d]^{\cup \cL_D} \\
H^2(D) \ar[r]^{\gamma_4} & \bigoplus_{i=1}^2 H^4(X_i) \ar[r]^{\rho_4} & H^4(D),  
}
\]
where the horizontal sequences are complexes and its cohomology groups in the middle are $H^2(X_t)$ and $H^4(X_t)$. 
 \begin{claim} 
  The homomorphism $\phi_{\cL_t}:=\cup \cL_t \colon H^2(X_t) \to H^4(X_t)$ is an isomorphism. 
  Hence we see that $\cL_t \in \Pic X_t$ is Lefschetz. 
\end{claim}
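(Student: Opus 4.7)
The plan is to use the commutative diagram to reduce the claim to a statement about the middle cohomologies of the top and bottom rows, then combine this with a Poincar\'{e} duality dimension count and a short diagram chase.

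First, in this example the weight filtration on $H^2(X_t)$ and $H^4(X_t)$ degenerates to the middle pieces, so that $\cup c_1(\cL_t)$ becomes the map induced by $\phi_{\cL}$ on those pieces. Indeed, $X_1$ (a blow-up of $(\bP^1)^3$ along smooth curves) and $X_2 = (\bP^1)^3$ are both rational, hence $H^{\mathrm{odd}}(X_i, \bC) = 0$; and $D \in |{-}K_{(\bP^1)^3}|$ is a smooth K3 surface, hence $H^1(D) = H^3(D) = 0$. By Proposition~\ref{prop:H3CYdesc}(ii), all weight-graded pieces of $H^2(X_t)$ and $H^4(X_t)$ except the middle ones vanish, yielding canonical isomorphisms $H^2(X_t) \simeq \Ker \rho_2/\Image \gamma_2$ and $H^4(X_t) \simeq \Ker \rho_4/\Image \gamma_4$. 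The diagram commutes by the projection formula $\cL_i \cdot \iota_{i,*}(w) = \iota_{i,*}(\cL_D \cdot w)$, and by Corollary~\ref{cor:Tyurincase} $X_t$ satisfies the $\partial\bar{\partial}$-lemma, so Poincar\'{e} duality on the Calabi--Yau 3-fold yields $\dim H^2(X_t) = \dim H^4(X_t)$. Hence it suffices to prove injectivity of the induced map.

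Second, I would verify that $\phi_{\cL}$ is itself an isomorphism on $\bigoplus H^2(X_i) \to \bigoplus H^4(X_i)$. On $X_2 = (\bP^1)^3$, the coefficient product $(8a^2+1)(1+4a)(1-4a)$ is nonzero for integer $a \ge 1$, so by the preceding $(\bP^1)^3$ example $\cL_2$ is Lefschetz (though not Hodge--Riemann, since the product is negative). On $X_1$, $\cL_1 = \mu^*(F_1+F_2+F_3)$ is the pullback of an ample class under a blow-up along codimension-$2$ smooth centers, hence Hodge--Riemann (and in particular Lefschetz) by the cited blow-up example. Therefore $\phi_{\cL}$ is an isomorphism.

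Finally, to run the diagram chase, suppose $x \in \Ker \rho_2$ satisfies $\phi_{\cL}(x) = \gamma_4(z)$ for some $z \in H^2(D)$. The projection formula gives $\phi_{\cL}(\gamma_2(w)) = \gamma_4(w \cdot c_1(\cL_D))$ for $w \in H^0(D) = \bC$, so, since $\phi_{\cL}$ is injective, the conclusion $x \in \Image \gamma_2$ is equivalent to the existence of $w \in \bC$ with $z - w \cdot c_1(\cL_D) \in \Ker \gamma_4$. The main obstacle is verifying this last condition: it requires careful tracking of the gluing $\iota^a \circ \mu|_{\tilde D_1} \colon \tilde D_1 \to D_2$, under which $\cL_1|_{\tilde D_1}$ identifies with $(\iota^a \circ \mu|_{\tilde D_1})^*(\cL_2|_{D_2})$ (this being exactly the compatibility that makes $\cL_0 \in \Pic X$ globally defined). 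The specific coefficients $(8a^2+1), (1+4a), (1-4a)$ of $\cL_2$ and the choice of the curve $C_{a+1}$ are designed precisely so that this compatibility holds and $c_1(\cL_D)^2 \neq 0$ on the K3 surface $D$, reducing the verification to a nonzero-determinant check for a matrix of intersection numbers on $D$.
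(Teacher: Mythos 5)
Your reduction to showing injectivity of the induced map $\psi_{\cL}\colon \Ker\rho_2/\Image\gamma_2 \to \Ker\rho_4/\Image\gamma_4$ is correct and matches the paper, and your observation that $\phi_{\cL}$ is already an isomorphism on $H^2(X_1)\oplus H^2(X_2)$ (via the blow-up example for $\cL_1$ and the $(\bP^1)^3$ computation for $\cL_2$) is a valid structural point that the paper does not use. However, there is a genuine gap at exactly the step you flag as ``the main obstacle'': an isomorphism between the middle terms of the two rows does not imply injectivity on the cohomology of the rows, and you must still rule out that some $x\in\Ker\rho_2\setminus\Image\gamma_2$ has $\phi_{\cL}(x)\in\Image\gamma_4$. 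In your formulation this is the assertion that any $z\in H^2(D)$ with $\gamma_4(z)\in\phi_{\cL}(\Ker\rho_2)$ lies in $\bC\cdot c_1(\cL_D)+\Ker\gamma_4$. That is a nontrivial statement about how the $(a+4)$-dimensional subspace $\phi_{\cL}(\Ker\rho_2)$ meets the $3$-dimensional subspace $\Image\gamma_4$ inside $H^4(X_1)\oplus H^4(X_2)$; it does not reduce to ``$c_1(\cL_D)^2\neq 0$ on the K3 surface $D$'' nor to the compatibility of $\cL_1,\cL_2$ under the gluing (which only guarantees that $\cL_0$ exists, not that the intersection of those two subspaces is exactly the line $\gamma_4(\bC\cdot c_1(\cL_D))$). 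You never identify the matrix whose determinant is supposed to be checked, so the argument as written establishes only that $\psi_{\cL}$ is well defined and that the ambient map is invertible, not that $\psi_{\cL}$ is injective.

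The paper carries out precisely the computation you defer: it exhibits $a+3$ explicit elements $\tilde F_1,\tilde F_2,\tilde F_3,\tilde E_1,\dots,\tilde E_a$ of $\Ker\rho_2$ inducing a basis of $\Ker\rho_2/\Image\gamma_2$, computes their images under $\phi_{\cL}$ followed by the isomorphism $\bar\tau\colon\Coker\gamma_4\to H^4(X_1)$, and verifies linear independence via the determinant $8(64a^4-2)\neq 0$ together with the contributions of the exceptional classes $e_j$. Some computation of this kind, which is where the specific coefficients $(8a^2+1)$, $(1+4a)$, $(1-4a)$ actually enter, is unavoidable and is missing from your proposal.
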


\begin{proof}[Proof of Claim]
Note that we have  
\[
H^2(X_t) \simeq \Ker \rho_2 / \Image \gamma_2, \ \ H^4(X_t)\simeq \Ker \rho_4/ \Image \gamma_4. 
\] 
We shall show that the homomorphism $$\psi_{\cL} \colon  \Ker \rho_2 / \Image \gamma_2 \to \Ker \rho_4/ \Image \gamma_4$$ 
induced by $\phi_{\cL}$ is injective, thus isomorphism by the dimension count.  

Let $E_i \subset X_1$ for $i=1,\ldots ,a+1$ be the  $\mu$-exceptional divisors over $C_i$. 
 We can take the basis of $\Ker \rho_2 / \Image \gamma_2$ induced by the $(a+3)$ elements 
 $\tilde{F}_1, \tilde{F}_2, \tilde{F}_3, \tilde{E}_1, \ldots , \tilde{E}_a \in \Ker \rho_2$ 
 given by 
\[
\tilde{F}_1 = (\mu^* F_1, F_1), 
\]
\[
\tilde{F}_2 = (\mu^* \left( (4a^2 - 2a)F_1 +(1-2a)F_2 + 2a F_3 \right) , F_2) 
\]
\[
\tilde{F}_3 = (\mu^* \left( (4a^2 + 2a)F_1 -2a F_2 + (1+2a) F_3 \right), F_3), 
\]
\[
\tilde{E}_i = (\cO(E_i), \cO(F_1))  \ \ (i=1, \ldots ,a). 
\]
For $1 \le i < j \le 3$, let $\pi_{ij}= \pi_i \times \pi_j \colon \bP^1 \times \bP^1 \times \bP^1 \to \bP^1 \times \bP^1$.  
Let $f_{ij}:= [\pi_{ij}^{-1} (p)] \in H^4(X_2)$ for $p \in \bP^1 \times \bP^1$ and $1\le i < j \le 3$ be the fiber class. 
Let $e_i:= [\mu^{-1}(q_i)] \in H^4(X_1)$ be the class of the fibers of $\mu$ over $q_i \in C_i$ for $i=1,\ldots a+1$. 
We know that $H^4(X_1)$ are generated by $\mu^*(f_{ij})$'s and $e_i$'s and $H^4(X_2)$ is generated by $f_{ij}$'s.  

We can check the injectivity of $\psi_{\cL}$ as follows. Note that    
\[
\phi_{\cL}(\tilde{F}_1) = (\mu^*(f_{12}+ f_{13}), (1+4a) f_{12} + (1-4a)f_{13}), 
\]
\[
\phi_{\cL} (\tilde{F}_2) = \left((4a^2-4a+1)f_{12}+ 4a^2 f_{13} + f_{23}, (8a^2+1) f_{12} + (1-4a) f_{23} \right), 
\]
\[
\phi_{\cL} (\tilde{F}_3) = \left( 4a^2 f_{12} + (4a^2+4a+1) f_{13} +  f_{23}, (8a^2+1) f_{13} + (1+4a) f_{23} \right). 
\] 
Since the image of $\gamma_4$ is generated by $(\mu^*f_{ij}, -f_{ij})$ for $1 \le i < j \le 3$, we see that 
\[
\tau \colon H^4(X_1) \oplus H^4(X_2) \to H^4(X_1) ; (\mu^* (\alpha) + \sum a_i e_i, \beta) \mapsto \mu^*(\alpha + \beta) + \sum a_i e_i
\]
induces an isomorphism $\bar{\tau} \colon \Coker \gamma_4 \to H^4(X_1)$. 
Since we have 
\[
\tau (\phi_{\cL}(\tilde{F}_1)) = \mu^*((2+4a)f_{12} + (2-4a)f_{13}), 
\]
\[
\tau (\phi_{\cL}(\tilde{F}_2)) = \mu^*((12a^2-4a+2)f_{12} + 4a^2 f_{13} + (2-4a)f_{23}),  
\]
\[
\tau (\phi_{\cL}(\tilde{F}_3)) = \mu^*((4a^2)f_{12} + (12a^2+4a+2) f_{13} + (2+4a)f_{23}),   
\]
and 
\[
\begin{vmatrix}
2+ 4a & 12 a^2 -4a+2 & 4a^2 \\
2-4a & 4a^2 & 12a^2 +4a +2 \\
0 & 2-4a & 2+4a 
\end{vmatrix} = 8(64a^4-2) \neq 0, 
\]
we see that $\tau (\phi_{\cL}(\tilde{F}_1)), \tau (\phi_{\cL}(\tilde{F}_2)), \tau (\phi_{\cL}(\tilde{F}_3))$ are linearly independent. 
By this and the description 
\[
\phi_{\cL} ( \tilde{E}_j ) = \left( 4 e_j, (1+4a) f_{12} + (1-4a) f_{13} \right) \ \ (j=1, \ldots ,a), 
\] we also see that $\tau (\phi_{\cL}(\tilde{F}_1)), \tau (\phi_{\cL}(\tilde{F}_2)), \tau (\phi_{\cL}(\tilde{F}_3)), \tau (\phi_{\cL}(\tilde{E}_1)), \ldots , \tau (\phi_{\cL}(\tilde{E}_a))$ are linearly independent. 
This implies the injectivity of $\psi_{\cL}$.  
\end{proof}

Hence we see that $\cL_t$ is Lefschetz. 
However, we also have the following.

\begin{claim}\label{claim:L_tpmnotHodgeRiemann}
 $\cL_t$ and $\cL_t^{-1}$ are not Hodge--Riemann. 
 \end{claim}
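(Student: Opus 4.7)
The plan is to compute the signature of the real symmetric bilinear form
\[
q_{\cL_t}(\alpha, \beta) := \int_{X_t} \alpha \cup \beta \cup c_1(\cL_t)
\]
on $H^2(X_t, \bR)$ and to exhibit at least two independent positive directions and at least two independent negative directions. Since $X_t$ is a Calabi--Yau $3$-fold with $H^{2,0} = 0$, the primitive $(1,1)$-cohomology $P^{1,1}_{\cL_t}$ coincides with $P^2_{\cL_t}$, which is a real hyperplane in $H^2(X_t, \bR)$ by the Lefschetz property already established. Hence $\cL_t$ being Hodge--Riemann would force the signature of $q_{\cL_t}$ to be $(1, b_2-1)$, while $\cL_t^{-1}$ being Hodge--Riemann would force $(b_2-1, 1)$; an indefinite signature $(p,q)$ with $p,q \ge 2$ rules out both possibilities at once.

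The key computational input will be a Mayer--Vietoris formula for triple products on the semistable smoothing: under the identification $H^2(X_t, \bC) \simeq \Ker \rho_2 / \Image \gamma_2$ from Proposition \ref{prop:H3CYdesc}(ii), representatives $(\alpha_1, \alpha_2), (\beta_1, \beta_2), (\gamma_1, \gamma_2) \in \Ker \rho_2 \subset H^2(X_1) \oplus H^2(X_2)$ will satisfy
\[
\int_{X_t} \alpha \cup \beta \cup \gamma = \int_{X_1} \alpha_1 \beta_1 \gamma_1 + \int_{X_2} \alpha_2 \beta_2 \gamma_2,
\]
which follows from the compatibility of the log-de Rham cup product on $H^*(X, \Lambda_X^\bullet)$ with the weight-graded decomposition (no correction from the double locus $D$ appears because $\dim D = 2$ forbids a triple product on $D$). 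Moreover, $\Image \gamma_2$ is isotropic for $q_{\cL_t}$ by the adjoint relation \eqref{eq:adjointRhoGamma}, so the form descends to the quotient $\Ker \rho_2 / \Image \gamma_2$.

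With the explicit basis $\tilde F_1, \tilde F_2, \tilde F_3, \tilde E_1, \ldots, \tilde E_a$ already recorded in the preceding claim, I would expand the intersection matrix as the sum of an $X_1$- and an $X_2$-contribution. The $X_2$-contribution on the $\tilde F_i$-span is exactly the quadratic form of item (iv) of the previous example with coefficients $(a_1, a_2, a_3) = (8a^2+1,\, 1+4a,\, 1-4a)$; since $a_1 a_2 a_3 < 0$ for $a \ge 1$, it has signature $(2,1)$ by that example. The $X_1$-contribution from $\cL_1 = \mu^*(F_1 + F_2 + F_3)$ is the Hodge--Riemann form for the ample class $F_1+F_2+F_3$ on $\bP^1 \times \bP^1 \times \bP^1$, of signature $(1,2)$; a short linear algebra computation will confirm that the sum of these two matrices still has two positive eigenvalues on the $\tilde F_i$-span. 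The exceptional directions $\tilde E_i$ will contribute a negative diagonal block (since $E_i^2 \cdot \cL_1 < 0$ and $E_i \cdot E_j \cdot \cL_1 = 0$ for $i \neq j$), yielding a total signature on $H^2(X_t, \bR)$ with both $p \ge 2$ and $q \ge 2$ for every $a \ge 1$.

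The main obstacle will be handling the nontrivial gluing $\iota^a \circ \mu|_{\tilde D_1}$, which forces the $\tilde F_i$'s to be $(\iota^a)^*$-twisted combinations (rather than naive pullbacks of $F_i$) and produces cross terms between the $\tilde F_i$- and $\tilde E_j$-directions in the full matrix. Using the explicit formulas for $\tilde F_i$ and $\tilde E_j$ recorded in the proof of the preceding claim, this reduces to a finite linear-algebra verification, and the signature conclusion should remain robust under the perturbation introduced by the gluing.
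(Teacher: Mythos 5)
Your overall strategy is sound but genuinely different from the paper's, and it is considerably heavier. The paper disposes of $\cL_t$ in one line: since the Hodge--Riemann relation on $P^0=H^0$ forces $c_1(\cL_t)^3>0$, the computation $\cL_t^3=\cL_1^3+\cL_2^3=6\bigl(1+(8a^2+1)(1+4a)(1-4a)\bigr)<0$ already rules it out, with no need to touch $H^2$ at all. For $\cL_t^{-1}$ the paper does not compute a signature either; it exhibits a single explicit class $\Delta_{21}\in(\cL_t^2)^{\perp}$ with $\Delta_{21}^2\cdot\cL_t<0$, which violates the primitive $(1,1)$ relation for $\cL_t^{-1}$. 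Your signature framework (Hodge--Riemann for $\cL_t$ forces signature $(1,b_2-1)$ of $q_{\cL_t}$, for $\cL_t^{-1}$ forces $(b_2-1,1)$, so $p,q\ge 2$ kills both) is correct given the Lefschetz property and $H^{2,0}(X_t)=0$, and your componentwise triple-product formula on $\Ker\rho_2$ is the same one the paper uses implicitly. What your route buys is a single uniform statement; what it costs is that you must establish \emph{both} $p\ge 2$ and $q\ge 2$, whereas the paper only ever needs one negative direction at a time.

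The gap is that the two assertions carrying all the content are left unverified. First, ``the sum of these two matrices still has two positive eigenvalues on the $\tilde F_i$-span'' is not checked for general $a$; this is a genuine computation (the two summands have signatures $(2,1)$ and $(1,2)$, whose sum a priori could be anything), and it is exactly here that the specific coefficients $(8a^2+1,1+4a,1-4a)$ and the $\iota^a$-twisted $\tilde F_2,\tilde F_3$ enter. Second, the $\tilde E_j$-directions are \emph{not} $q$-orthogonal to the $\tilde F_i$-span (each $\tilde E_j$ has second component $F_1$, so $q(\tilde E_j,\tilde F_i)=F_1\cdot(\tilde F_i)_2\cdot\cL_2\neq 0$ in general), so ``a negative diagonal block'' is not an accurate description of the full matrix, and the claim that the signature ``should remain robust'' for every $a\ge 1$ is precisely what a proof must supply. (The differences $\tilde E_i-\tilde E_j$ \emph{are} orthogonal to everything else and give $a-1$ negative directions, but that does not suffice when $a=1$.) To close the gap with minimal effort you should at least import the paper's observation that $q_{\cL_t}(\cL_t,\cL_t)=\cL_t^3<0$, which settles $\cL_t$ immediately and also hands you one of the two required negative directions for free.
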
 

\begin{proof}
We calculate that 
\begin{multline}
\cL_t^3 = \cL_1^3 + \cL_2^3 = (F_1+ F_2 + F_3)^3 + ((8a^2+1)F_1+ (1+4a) F_2 + (1-4a) F_3)^3  \\ 
= 6 \left(1+ (8a^2+1)(1+4a)(1-4a) \right)<0, 
\end{multline}
thus $\cL_t$ is not Hodge--Riemann. 
We can also check that $\cL_t^{-1}$ is not Hodge--Riemann by calculating $(\cL_t^2)^{\perp} \subset H^2(X_t)$. 
Since we have  
\[
(\cL_1^2, \cL_2^2) = (2(F_{12}+ F_{23} + F_{13}), 2((8a^2+1)(1+4a)F_{12} + (8a^2+1)(1-4a)F_{13} + (1-16a^2) F_{23})), 
\]
the element
\[
\Delta_{21}:= \left( \mu^*(F_2 - F_1), (4a^2 + 2a -1) F_1 + (1+2a)F_2 -2a F_3 \right) \in H^2(X)
\]
defines an element $[\Delta_{21}] \in H^2(X_t)$. Then we can calculate 
\begin{multline}
\Delta_{21}^2 \cdot \cL_t = \mu^*(F_2-F_1)^2 \cdot \mu^*(F_1 + F_2 + F_3) \\ 
+ \left((4a^2 + 2a -1) F_1 + (1+2a)F_2 -2a F_3 \right)^2 \cdot \left( (8a^2+1)F_1+ (1+4a) F_2 + (1-4a) F_3 \right) \\
 = -2 + 2 (-32a^4 -32a^3 +8a -1) <0 
\end{multline}
for $a> 0$. This shows that $\cL_t^{-1}$ is not Hodge--Riemann as well. 
\end{proof}

It might be interesting to find an example of a non-K\"{a}hler Calabi--Yau manifold with a Hodge--Riemann line bundle. 
\end{eg}	

\begin{eg}\label{eg:Clemensmfd}
For a semistable degeneration $\phi \colon \cY \to \Delta$ of Clemens manifolds as in \cite{MR4085665}, we can check that there is no line bundle $\cL_0$ on the central fiber $\cY_0= \bigcup Y_i$ such that all $\cL_0|_{Y_i}$ are Hodge-Riemann as follows. 

We recall that the central fiber $\cY_0$ is 
\[
\cY_0 = \tilde{X} \cup Q_1 \cup \cdots \cup Q_l, 
\]
where $\tilde{X}$ has $ \nu \colon \tilde{X} \to X$ which is the blow-up along disjoint $(-1,-1)$-curves $C_1, \ldots , C_l \subset X$ on a projective Calabi--Yau 3-fold $X$. 
Let us take $X \subset \bP^4$ is a smooth quintic 3-fold with infinitely many disjoint $(-1,-1)$-curves constructed by Clemens and Friedman (cf. \cite{MR1141199}). 
Let $E_i:= \nu^{-1}(C_i)$ be the exceptional divisor for $i=1,\ldots ,l$.  
We know that $Q_1, \ldots , Q_l$ are isomorphic to a smooth quadric hypersurface $Q \subset \bP^4$. 
Let $\cL_0 \in \Pic Y_0$ and let $\cL_{\tilde{X}}:= \cL_0|_{\tilde{X}}$, $\cL_i:= \cL_0|_{Q_i}$ for $i=1,\ldots, l$. 
Let $d_i:= \cO_X(1) \cdot C_i$ be the degree of the curve $C_i$. 
Then we can write 
\[
\cL_{\tilde{X}} = a \left( \nu^* \cO_X(1) - \sum_{i=1}^l d_i E_i \right), \ \ \cL_i= \cO_{Q_i}(a d_i) 
\]
for some $a \in \bZ$. 
For $a>0$, we can check that $\cL_X$ and $\cL_1, \ldots , \cL_l$ are Lefschetz by elementary calculation. 
However, we can also check that $\cL_{\tilde{X}}$ is not Hodge--Riemann. 
(Indeed, since $b_2 (\cY_t)=0$, we see that the induced line bundle $\cL_t \in \Pic \cY_t$ is trivial and not Lefschetz, thus 
$\cL_{\tilde{X}}$ can not be Hodge--Riemann. )
\end{eg}	

\begin{rem}\label{rem:clemensrho2}
On $\cY_0$ in Example \ref{eg:Clemensmfd}, we can check that the restriction map $\rho_2$ is surjective, 
thus Theorem \ref{thm:intro2} also implies that $H^3$ on Clemens manifold admits a pure Hodge structure. 

On the Hodge--Riemann bilinear relation, we see that 
\[
\Ker \rho_3 / \Image \gamma_3 = H^3(\cY_0^{(1)}, \bC)
\] 
since $\cY_0^{(2)}$ is a union of quadric surfaces and $H^1(\cY_0^{(2)}, \bC) = H^3(\cY_0^{(2)}, \bC)=0$.  
Hence we see that $Q_{W_3}$ is positive definite from the Hodge--Riemann relation on $H^3(\cY_0^{(1)}, \bC)$. 
Moreover, we see that $\Ker \gamma_4=0$ since the restriction map $\rho_2 \colon H^2(\cY_0^{(1)}, \bC) \to H^2(\cY_0^{(2)}, \bC)$ 
is surjective. Hence we can check that $\cY_0$ and $\cY \to \Delta$ satisfy the condition ($\ast$) in Theorem \ref{thm:Hodge-RiemannH^3}. 
\end{rem}

\section*{Acknowledgement}
The author is grateful to Taro Fujisawa, Kenji Hashimoto and Hisashi Kasuya for valuable communications. 
The author would like to thank Kuan-Wen Chen for valuable communications and pointing out mistakes on Proposition \ref{prop:H3CYdesc}(iii) in the previous version. 
He would also like to thank the anonymous referee for pointing out a mistake in the proof of Theorem \ref{thm:Hodge-RiemannH^3} and valuable comments. 
This work was partially supported by JSPS KAKENHI Grant Numbers JP17H06127, JP19K14509, JP23K03032. 


\bibliographystyle{amsalpha}
\bibliography{sanobibs-bddlogcy}

\end{document}